\documentclass[preprint, 11pt]{amsart}
\usepackage{amsfonts,amssymb,amsmath,amscd,mathtools,multicol,tikz, tikz-cd,caption,enumerate,mathrsfs,geometry,tgpagella}
\usepackage{amsthm}
\usetikzlibrary{arrows,chains,matrix,positioning,scopes}
\usepackage{bigfoot}
\usepackage[pagebackref=true, colorlinks, linkcolor=blue, citecolor=blue]{hyperref}
\usepackage{fullpage}

\usepackage{multirow}
\usepackage[multiple]{footmisc}
\usepackage{kerkis}
\parindent 0in
\parskip .02in
\makeatletter
\makeindex
%%%%%%%%%%%%%%%%%%%%%%%%%%%%%%%%%%%%%%%%%%%%%%%%%%%
%%%%%%%%%%%%%%%%%%%%%%%%%%%%%%%%%%%%%%%%%%%%%%%%%%%%%%%%%%%%%%%%%%%%%%%%%%%%%%%%%%%%%%%%%%%%%%%%%%%%%%
\newtheorem{theorem}{Theorem}[section]
\newtheorem{proposition}[theorem]{Proposition}
\newtheorem*{propostion*}{Proposition*}
\newtheorem*{theorem*}{Theorem}
\newtheorem{corollary}[theorem]{Corollary}
\newtheorem{definition}[theorem]{Definition}
\newtheorem{lemma}[theorem]{Lemma}
\newtheorem{remark}[theorem]{Remark}
\theoremstyle{remark}
\newtheorem{example}[theorem]{Example} 
\newtheorem{examples}[theorem]{Examples}
 
\newtheorem*{conjecture*}{\bf{Conjecture}}

\newtheorem{algorithm}{Algorithm}[section]
\newtheorem*{algorithm*}{Algorithm}
\numberwithin{equation}{section}
%%%%%%%%%%%%%%%%%%%%%%%%%%%%%%%%%%%%%%%%%%%%%%%%%%%%%%%%%%%%%%%%%%%%%%%%%%%%%%%%%%%%%%%%%%%%%%%%%%%%%%%%%%%%%%%%%%%%%%%%%%%%%%%%%%%%%%%%%%%%%%%%%%%%%%%%%

\begin{document}

	\title{\boldmath{New and General Type Meromorphic  $1$-forms on Curves}}
	\author{Partha Kumbhakar}
	\address{Indian Institute of Science Education and Research (IISER) Mohali Sector 81, S.A.S. Nagar, Knowledge City, Punjab 140306, India.}
	\email{\url{parthakumbhakar@iisermohali.ac.in}}

	\maketitle
	
	\begin{abstract}
		In this article, we study the existence of new and general type meromorphic $1$-forms on curves through explicit construction. Specifically, we have constructed a large family of new and general type meromorphic $1$-forms on $\mathbb{P}^1,$ elliptic and hyperelliptic curves. We also established a connection to the Hurwitz realization problem of branch cover for the Riemann Sphere, which provides an algorithm to determine whether a $1$-form on $\mathbb{P}^1$ (of some restricted class) is new or old.        
	\end{abstract}

	\section{Introduction}\label{Intro}
	Let $C$ be an algebraically closed differential field of characteristic zero with trivial derivation (i.e. $c'=0$ for all $c\in\ C$). Let $f\in C[Z,W]$ be an irreducible polynomial such that $\frac{\partial f}{\partial W}\neq 0,$ with function field $$C(z,w):=\mathrm{Frac}\left( \frac{C[Z,W]}{(f)}\right).$$ Consider a first order algebraic differential equation \begin{equation}
		f(u,u')=0,
	\end{equation}
	where $u'$ stands for the derivative of $u.$ Then $C(z,w)$ is a differential field with derivation $\delta$ defined by $\delta(z)=w$ and $\delta(c)=c'=0$ for all $c\in C.$ Let  $(X,\omega)$ be the pair where $X$ is the smooth projective curve with function field $C(X):=C(z,w)$ and $\omega\in \Omega^1_{X}$ is a meromorphic $1$-form on $X$ dual to the $C-$linear derivation $\delta.$  The $1$-form  $\omega$ and the derivation $\delta$ are related as follows: $\omega=dh/\delta(h)$ for any $h\in C(z,w)\setminus C.$ Moreover, any such pair $(X, \omega)$ with $\omega\neq0$ can be thought of as a first order algebraic differential equation over $C$ (\cite[Lemma 5.2]{NNvdP15}). As an example, the associated pair for the equation $u'-g(u)=0$ is $(\mathbb{P}^1, \frac{dx}{g(x)}).$
	
	To understand the algebraic dependency of the solutions of first order differential equations, the authors in \cite{NvdPT22} study certain geometric properties of the associated pairs. They classify the pairs (hence first order differential equations) into the following types:
	\begin{enumerate}[(i)]
		\item $(X,\omega)$ where $\omega=dh$ for some $h\in C(X),$ is called of \emph{exact type};  
		\item $(X,\omega)$ where $\omega=dh/ch$ for some $h\in C(X)$ and $c\in C\setminus 0,$ is called of \emph{exponential type}; 
		\item $(X,\omega)$ where $\omega=dh/g$ for some $g,h\in C(X)$ where $g^2=h^3+ah+b$ for $a,b \in C$ with $4a^3+27b^2\neq 0,$ is called of \emph{Weierstrass type}; 
		\item In the rest of the cases, $(X,\omega)$ is called of \emph{general type}. 
	\end{enumerate}

	Furthermore, they define a pair $(X,\omega)$ to be \emph{new}\footnote{In \cite{HI03}, the term \emph{essential} is used in place of \emph{new}.} if $(X, \omega)$ does not have a proper pullback, i.e., there exists no pair $(Y,\eta)$ and a morphism $\phi:X\rightarrow Y$ with $\mathrm{deg}(\phi)\geq 2$ such that $\phi^*\eta=\omega.$ Otherwise, $(X,\omega)$ is called \emph{old}.

	In the context of function fields, a pair $(X, \omega)$ is of exact type (respectively of exponential type, respectively of Weierstrass type) if an only if the differential field $C(X)$ contains an element $h, h\notin C$ satisfying $\delta(h)=1$ (respectively $\delta(h)=ch, c\in C\setminus 0,$ respectively $\delta(h)^2=h^3+ah+b$ where $a, b\in C$ with $4a^3+27b^2\neq 0$). A pair $(X, \omega)$ is new if and only if the differential field $C(X)$ is simple, i.e., if $K$ is a differential subfield of $C(X)$ with $C\subseteq K\subseteq C(X)$ then either $K=C$ or $K=C(X).$
	
	In \cite[Theorem 2.1(a)]{NvdPT22} it is proved that any number of distinct nonconstant solutions of an equation of general type and new are $C-$algebraically independent. The converse also holds: If any two nonconstant solutions of a first order algebraic differential equation over $C$ are $C-$algebraically independent, then the equation must be of general type and new.

	Furthermore, the equations of general type and new are "irreducible first order differential equations," and the nonconstant solutions of these equations are "new functions" in the sense that such solutions cannot be expressed iteratively in terms of solutions of  linear differential equations, abelian functions, or solutions of any other first order differential equations (cf. \cite{UH88,NK88}). Also, new and of general type equations do not possess the Painlevé property.

	In the publications \cite{HI03} and \cite{NvdPT22}, the existence of new $1$-forms on a curve \footnote{In the paper, we sometimes say \emph{a $1$-form $\omega$ on a curve $X$ is $\dots$} type, which will mean \emph{the pair $(X,\omega)$ is of $\dots$} type. Here, $\dots$ stands for the adjectives: exact, exponential, Weierstrass, general. Also, we say a $1$-form $\omega$ on a curve $X$ is new to mean that the pair $(X,\omega)$ is new. The same for old.} was considered. For  $\mathbb{P}^1,$ Hrushovski-Itai prove the following result (\cite[Lemma 2.23]{HI03}): Let $\omega\in \Omega^1_{\mathbb{P}^1}$ be a $1$-form such that $\omega$ has at least two nonzero residues and that no two distinct nonzero residues of $\omega$ are linearly dependent over $\mathbb{Q},$ then $\omega$ is new. Their proof can be extended to show that $\omega$ is also general type. This result can be used to construct new and general type $1$-forms on $\mathbb{P}^1$ explicitly. Next, we recall a classical results of Rosenlicht (\cite{Ros74}). He considered the following two equations \begin{equation}\label{Ros-1}
		u'=u^3-u^2,
	\end{equation} \begin{equation}\label{Ros-2}
		u'=\frac{u}{u+1}.
	\end{equation} In view of \cite{NvdPT22}, both the equations (\ref{Ros-1}) and (\ref{Ros-2}) are of general type. Rosenlicht proves that any two distinct nonconstant solutions of these equations (\ref{Ros-1}) and (\ref{Ros-2}) are $C-$algebraically independent, consequently they are new.

	For curves of genus $\geq 1,$ only some theoretical result on the existence of new $1$-forms is known.
	See Lemma 2.18 in \cite{HI03} for the existence of new $1$-forms on elliptic curves. On a curve $X$ of genus $\geq 2,$ it is shown that the set of old forms is contained in a countable union of proper subspaces of $\Omega_X^1$ (\cite[Lemma 2.13]{HI03}). In \cite[Theorem 5.1]{NvdPT22}, Noordman et al. prove that given an effective divisor $\mathrm{D}$ on a curve $X$ of genus $g\geq 2,$  a "generic" element in $$\Omega(\mathrm{D})=\left\lbrace \omega\in \Omega^1_{X} :  \mathrm{div}(\omega)\geq-\mathrm{D}\right\rbrace$$  is a new $1$-form.

	The purpose of this paper is to demonstrate the existence of new and general type meromorphic $1$-forms on curves by constructing them explicitly. In particular, we present systematic constructions of such forms on $\mathbb{P}^1$, elliptic curves, and hyperelliptic curves. Although explicit examples of new and general type $1$-forms on $\mathbb{P}^1$ are known from the results of Rosenlicht and Hrushovski-Itai, our contribution is to provide a general and algorithmic method of construction, which moreover extends to elliptic and hyperelliptic curves of arbitrary genus. This also allows us to address, at least in part, the following algorithmic question for $X=\mathbb{P}^1$: given a pair $(X,\omega)$ (or, equivalently a differential equation), determine its position within the classification of first order differential equations. The main results of the paper are as follows.

	\begin{theorem}\label{main-higher-poles}
		Let $X$ be either $\mathbb{P}^1,$ elliptic or a hyperelliptic curve over $C.$ Given integers $r, n,$ and $m$ such that $r> 0, n\geq 0$\footnote{For elliptic and hyperelliptic curves, $n$ is either zero or an even integer.}\footnote{In case $n=0,$ i.e., $1$-forms with no simple poles, we need $m-r\geq 2-2g.$} and $m\geq 2.$ There exists a collection of new and general type meromorphic $1$-forms on $X$ with $r$ number of zeros, $n$ number of simple poles, and $m$ number of poles of order $\geq 2.$
	\end{theorem}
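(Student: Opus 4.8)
The plan is to split the problem into two tasks: (i) writing down on each of the three kinds of curve a meromorphic form with exactly the prescribed numbers of zeros, simple poles and higher-order poles, and (ii) certifying that such a form is new and of general type. For (ii) I would develop a residue criterion that settles most configurations outright and fall back on a divisor–symmetry argument in the low-pole cases. General type is the softer half, and I would read it off from residues. An exact form $dh$ has all residues $0$; an exponential form $dh/(ch)$ has residue $\mathrm{ord}_P(h)/c$ at each $P$, so any two nonzero residues are $\mathbb{Q}$-linearly dependent; and a Weierstrass form $dh/g$ with $g^2=h^3+ah+b$ is the pullback $(h,g)^*(dx/y)$ of the holomorphic differential $dx/y$ on $y^2=x^3+ax+b$, hence again has only vanishing residues. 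Thus, as soon as $\omega$ carries two nonzero residues whose ratio is irrational, $(X,\omega)$ is of general type, so throughout the construction I would prescribe such residues (algebraically independent elements of $C$) at the poles; in the remaining cases, where there are too few poles to host independent residues, I would exclude the three special types from the admissible pole orders and the shape of $\mathrm{div}(\omega)$.

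For \emph{new} the engine is the transformation of residues under a pullback $\phi\colon X\to Y$ with $\phi^*\eta=\omega$: at a point $P$ over $Q$ one has $\mathrm{res}_P\omega=e_P\,\mathrm{res}_Q\eta$, so all preimages of a fixed $Q$ carry $\mathbb{Q}$-dependent residues. Hence if $\omega$ has poles $P_1,\dots,P_k$ with nonzero, pairwise $\mathbb{Q}$-independent residues, the images $\phi(P_i)$ are distinct and each is totally ramified with the single preimage $P_i$, and Riemann–Hurwitz bounds $k$ by $\frac{(2g_X-2)-d(2g_Y-2)}{d-1}$ over all targets $Y$ (of genus at most $g_X$) and degrees $d\ge 2$. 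For $X=\mathbb{P}^1$ this bound equals $2$, while the vanishing of the residue sum forces $k\ge 3$ as soon as two independent residues are present; so the $\mathbb{P}^1$ case is complete once the divisor data is realized. When $\omega$ carries at least three independent nonzero residues the same estimate leaves, for $g_X=1$, only the degree-$2$ maps to $\mathbb{P}^1$, and for hyperelliptic $X$ only the hyperelliptic map together with a finite list of low-degree covers, still to be excluded.

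For the realization I would, on $\mathbb{P}^1$, take $\omega=R(x)\,dx$ with $R$ chosen to have the prescribed zeros and poles, setting the residues at the simple poles (and, when $n\le 2$, at one or two of the higher poles) to algebraically independent constants with sum $0$; this is possible exactly in the stated range, and varying the constants and the positions yields a positive-dimensional family. On elliptic and hyperelliptic curves I would prescribe the same divisor data and then adjust one free point by Abel–Jacobi (Riemann–Roch) so that the prescribed divisor is linearly equivalent to a canonical divisor, guaranteeing a differential $\omega$ with exactly that divisor; the parity condition on $n$ is what makes this balancing consistent. The numeric constraints $m\ge 2$ and, for $n=0$, $m-r\ge 2-2g$ then fall out of the identity $\deg\mathrm{div}(\omega)=2g-2$.

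The hardest step, which I expect to occupy most of the work, is certifying newness on hyperelliptic curves when the number of poles is small, because there the residue count is powerless: the unique degree-$2$ map to $\mathbb{P}^1$ already has up to $2g+2$ totally ramified points, and there are bielliptic maps and isogeny-type covers to targets of positive genus. I would exclude these geometrically rather than through residues. Any $\phi$ factoring through a $g^1_2$ forces $\mathrm{div}(\omega)$ to be invariant under the corresponding involution $P\mapsto c-P$, so choosing the zeros and poles in general position (with no common \emph{center} $c$ pairing them up) kills every degree-$2$ map to $\mathbb{P}^1$ — using uniqueness of the $g^1_2$ in the hyperelliptic case and a dimension count when $g=1$. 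Unramified isogenies are excluded because their fibers would either repeat residue values or organize the poles into cosets, both prevented by our choices; and the finitely many remaining candidate covers, bounded in degree and target genus by Riemann–Hurwitz, are ruled out because their fibers would organize the zeros and poles of $\omega$ into orbits, which general position again forbids. Assembling these exclusions with the residue criterion yields newness, and the free parameters furnish the asserted family.
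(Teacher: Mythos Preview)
Your residue strategy is essentially the mechanism behind the paper's Theorem~\ref{main-simple-poles}, not Theorem~\ref{main-higher-poles}, and it breaks down exactly in the regime Theorem~\ref{main-higher-poles} must cover. Already on $\mathbb{P}^1$ your claim that ``the $\mathbb{P}^1$ case is complete'' fails for $n=0,\ m=2$: with only two poles the residue sum forces the two residues to be negatives of one another, so you never assemble three pairwise $\mathbb{Q}$-independent nonzero residues, and your Riemann--Hurwitz count (two totally ramified points are always compatible with a degree-$d$ self-map of $\mathbb{P}^1$) gives no obstruction at all. The situation is worse on a hyperelliptic curve of genus $g$, where the theorem must handle, e.g., $n=0,\ m=2,\ r=1$; with at most two nonzero residues the total-ramification bookkeeping imposes no bound on $\deg\phi$ when $g_Y=0$. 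Your fallback ``general position'' paragraph does not close this gap. First, ``bounded in degree \ldots\ by Riemann--Hurwitz'' is false as stated: a curve has maps to $\mathbb{P}^1$ of every large degree, and nothing you wrote caps $d$. The missing ingredient is that $\eta$, being of general type, has a zero whose fiber lies inside the $r$ zeros of $\omega$; it is this, not Riemann--Hurwitz alone, that bounds $d$. Second, even granting a degree bound, ``general position forbids orbit structure'' is a dimension count you have not carried out, and it is in tension with your own realization step, where one point of the divisor is \emph{determined} by Abel--Jacobi from the others and hence not free to be generic.

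The paper's route is different in kind. Instead of residues, it constrains the \emph{orders} of the higher poles: choose each $w_i-1$ to have no divisor $\le 2g_X+r+n-1$ (respectively $\le 2g_X+r+m$ when $n=0$, with the $w_i-1$ prime) and arrange that some $w_i$ is not repeated among the $w_j$. For any putative pullback $\phi$, the general-type form $\eta$ has a zero; the fiber over that zero together with the fiber over a simple pole (or, when $n=0$, over a second higher pole of $\eta$) absorb $2d$ worth of ramification in the Riemann--Hurwitz identity, leaving each $e_{S_i}$ bounded by a constant independent of $d$. The divisibility hypothesis then forces every $e_{S_i}=1$, whereupon the unrepeated $w_i$ cannot share a fiber with any other $S_j$, contradicting $d\ge 2$. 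This works uniformly for all $m\ge 2$ regardless of how few simple poles are present, which is precisely where your residue approach stalls.
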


	\begin{theorem}\label{main-simple-poles}
		Let $X$ be a curve of genus $g$ over $C$ and $\omega$ be a meromorphic $1$-form on $X.$ Assume that $\omega$ has at least $2g+2$ poles with nonzero residues and that no subset with $2g+2$ distinct elements of the set of nonzero residues is linearly dependent over $\mathbb{Q}$\footnote{The existence of such a $1$-forms on an arbitrary curve is follows from \cite[Proposition 1.15, Page 200]{MR95}.}. Then $\omega$ is new and general type\footnote{Theorem \ref{main-simple-poles} extends the result \cite[Lemma 2.23]{HI03} of Hrushovski and Itai to curves of arbitrary genus.}.
	\end{theorem}

	The proof of these theorems effectively uses the behaviour of zeros, poles and residues of meromorphic $1$-forms under pullback and can be thought of as an application of the Riemann-Hurwitz formula. The proof of Theorem \ref{main-higher-poles} is constructive in nature, providing an explicit method to construct new and general type $1$-forms. Furthermore, the locations of the zeros and poles can be chosen arbitrarily, subject to a few mild constraints in the case of elliptic and hyperelliptic curves. Notice that, if a $1$-form $\omega$ satisfy the assumptions of Theorem \ref{main-simple-poles}, then $\omega$ has at least $2g+3$ poles with nonzero residues. Next, we provide an example which implies that the Theorem \ref{main-simple-poles} is sharp. Let $X$ be the smooth hyperelliptic curve given by the affine equation $y^2=(x-e_1)(x-e_2)\cdots(x-e_{2g+1})$ and take
	\begin{equation*}
		\omega= r_1\frac{dx}{x-e_1}+r_2\frac{dx}{x-e_2}+\cdots+r_{2g+1}\frac{dx}{x-e_{2g+1}},
	\end{equation*} where $r_1,\dots,r_{2g+1}\in C.$ Then $\omega$ has $2g+2$ simple poles, namely $(e_1,0),(e_2,0),\dots, (e_{2g+1},0)$ and $\infty.$ Let $r_\infty$ be the residue at $\infty$. Then $r_1+\cdots+r_{2g+1}+r_\infty=0.$ The set of residues $\left\lbrace r_1,\dots,r_{2g+1}\right\rbrace$  can be chosen with no $\mathbb{Q}-$linearly dependent relations among a subset of $\left\lbrace r_1,\dots,r_{2g+1}\right\rbrace$. Note that $\omega$ is defined in the subfield $C(x)$ of $C(X):=C(x,y).$ The pair $(X,\omega)$ is old as it is a pullback of $(\mathbb{P}^1, \omega)$ by the natural degree $2$ projection $\phi:X\rightarrow \mathbb{P}^1.$ Therefore, there exists an old $1$-form having $2g+2$ poles with nonzero residues such that any proper subset of the residues has no $\mathbb{Q}-$linear independent relations.

	As a corollary, we investigate the existence of a new $1$-form in $\Omega(\mathrm{D})=\left\lbrace \omega\in \Omega_X^1:\mathrm{div}(\omega)\geq-\mathrm{D}\right\rbrace $ for an effective divisor $D$ on a curve $X.$ This answers a question proposed in \cite[Section 5]{NvdPT22}. In fact, we consider the existence of a new and general type $1$-form in $\Omega(D).$ In general, there may not exist a new $1$-form in $\Omega(\mathrm{D}).$ For example, let $\mathrm{D}=\left[ 0\right] +\left[ \infty\right] $ on $\mathbb{P}^1.$ If $\omega \in \Omega(\mathrm{D})$ then $\omega=dx/cx$ for some $c\in C\setminus 0.$ The pair $(\mathbb{P}^1, \omega)$  is old as it is a proper pullback of $(\mathbb{P}^1, d(x^n)/cx^n)$ via the morphism $x\mapsto x^n.$ We prove the following result as a corollary of Theorem \ref{main-simple-poles}:
	\begin{corollary}\label{existence-in-omega-D}
		Let $\mathrm{D}=R_1+\cdots+R_n+w_1S_1+\cdots+w_mS_m,$ where $w_i\geq 2,$ be an effective divisor on a curve $X$ of genus $g.$\begin{enumerate}[(i)]
			\item If $n+m \geq 2g+3,$ there exists a family of new and general type $1$-forms in $\Omega(\mathrm{D}).$
			\item If $m\geq1,$ there exists a new $1$-form in  $\Omega(\mathrm{D}).$
		\end{enumerate}
	\end{corollary}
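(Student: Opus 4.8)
The plan is to treat the two parts separately, using Theorem \ref{main-simple-poles} for part (i) and a direct ramification count for part (ii). Throughout, recall that $\Omega(\mathrm{D}')\subseteq\Omega(\mathrm{D})$ whenever $\mathrm{D}'\le\mathrm{D}$ coefficientwise, and that $C$, being algebraically closed of characteristic zero, is infinite-dimensional over $\mathbb{Q}$ and hence contains arbitrarily many $\mathbb{Q}$-linearly independent elements.

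For part (i), I would produce forms in $\Omega(\mathrm{D})$ realizing the residue hypothesis of Theorem \ref{main-simple-poles}. Since $n+m\ge 2g+3$, choose $2g+3$ distinct points $P_1,\dots,P_{2g+3}$ among $R_1,\dots,R_n,S_1,\dots,S_m$ and set $\mathrm{D}_0=P_1+\cdots+P_{2g+3}$; as each $P_i$ occurs in $\mathrm{D}$ with coefficient $\ge 1$ (the $R_i$ with coefficient $1$, the $S_j$ with coefficient $w_j\ge 2$), we have $\mathrm{D}_0\le\mathrm{D}$ and so $\Omega(\mathrm{D}_0)\subseteq\Omega(\mathrm{D})$. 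Next I would invoke the existence of differentials of the third kind, exactly as in the reference \cite[Proposition 1.15]{MR95} cited for Theorem \ref{main-simple-poles}: the residue map sends $\Omega(\mathrm{D}_0)$ onto $\{(\rho_1,\dots,\rho_{2g+3})\in C^{2g+3}:\sum_i\rho_i=0\}$. I would then pick residues $r_1,\dots,r_{2g+3}$ whose only $\mathbb{Q}$-linear relation is $\sum_i r_i=0$ (take $r_1,\dots,r_{2g+2}$ independent over $\mathbb{Q}$ and $r_{2g+3}=-\sum_{i\le 2g+2}r_i$). A one-line linear-algebra step then shows that \emph{every} $(2g+2)$-element subset of $\{r_1,\dots,r_{2g+3}\}$ is $\mathbb{Q}$-linearly independent: if the subset obtained by deleting $r_j$ were dependent, then, as $r_j=-\sum_{i\ne j}r_i$ lies in the span of the remaining residues, the full set would span a $\mathbb{Q}$-space of dimension $<2g+2$, contrary to the choice. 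Any $\omega\in\Omega(\mathrm{D}_0)$ with these residues thus has $2g+3\ (\ge 2g+2)$ poles of nonzero residue satisfying the independence condition, so Theorem \ref{main-simple-poles} makes it new and general type; letting the residues range over this generic locus, or adding an arbitrary holomorphic differential, yields the asserted family.

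For part (ii), I would exhibit a single new form and verify newness directly. By Riemann--Roch, $\dim\Omega(2S_1)-\dim\Omega(S_1)=\ell(K+2S_1)-\ell(K+S_1)=1$, so there exists a form $\omega$ whose only pole is of order exactly $2$ at $S_1$; since $w_1\ge 2$ gives $2S_1\le\mathrm{D}$, we have $\omega\in\Omega(\mathrm{D})$. To see $\omega$ is new, suppose $\omega=\phi^*\eta$ with $\deg\phi=d\ge 2$, and put $Q=\phi(S_1)$. Because $\omega$ has a pole at $S_1$, the form $\eta$ has a pole at $Q$, and then the pullback order at each preimage of $Q$ is negative; since $\omega$ has a unique pole this forces $\phi^{-1}(Q)=\{S_1\}$, i.e. $S_1$ is totally ramified with $e=d\ge 2$. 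Writing $\mathrm{ord}_Q\eta=-p$ with $p\ge 1$, the transformation law $\mathrm{ord}_{S_1}\phi^*\eta=e\,(\mathrm{ord}_Q\eta+1)-1=e(1-p)-1$ together with $\mathrm{ord}_{S_1}\omega=-2$ gives $e(p-1)=1$, which is impossible for $e\ge 2$. Hence $\omega$ is new (and this single double pole gives newness irrespective of its type, matching the weaker conclusion of (ii)).

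The main obstacle is internal to part (i): the residue theorem forces $\sum_i r_i=0$, which is itself a $\mathbb{Q}$-linear relation, so a configuration of exactly $2g+2$ nonzero residues can never satisfy the no-$(2g+2)$-subset-dependent hypothesis of Theorem \ref{main-simple-poles}. This is precisely why one must spend one extra point and arrange $2g+3$ residues with a single unavoidable relation, and it is the reason the hypothesis $n+m\ge 2g+3$ (rather than $2g+2$) appears. For part (ii) the only real content is the newness verification, which the Riemann--Hurwitz order count above settles cleanly.
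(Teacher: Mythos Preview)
Your proof is correct and follows the same approach as the paper. For part (i) the paper says only that it ``follows from Theorem \ref{main-simple-poles}''; you have supplied exactly the details one would expect (choosing $2g+3$ points, prescribing $\mathbb{Q}$-generic residues subject only to the residue theorem, and checking the $(2g+2)$-subset condition), and your explanation of why $2g+3$ rather than $2g+2$ points are needed is a useful gloss. For part (ii) your argument is essentially identical to the paper's: the paper cites \cite[Problems VII.1]{MR95} for the existence of a form with a single double pole, while you obtain it from the Riemann--Roch count $\ell(K+2S_1)-\ell(K+S_1)=1$, and the newness argument via $e_{S_1}\mid|\mathrm{ord}_{S_1}(\omega)+1|=1$ is the same.
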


	We also derive a lower bound of the size of new and general type $1$-forms in $\Omega(\mathrm{D})$ for some restricted class of $\mathrm{D}$ (Theorem \ref{dim-thm} ). Regarding this, we construct new and general type $1$-forms with a given polar divisor. The result for $\mathbb{P}^1$ reads as follows. If $\mathrm{D}=R_1+\dots+z_0\infty+R_n+w_1S_1+\dots+w_mS_m$ with $n\geq 1, m\geq 2, z_0\geq 1,$ and $w_i$ sufficiently large. Then there exists a vector space $W\subseteq \Omega(\mathrm{D})$ of dimension at least $n \lfloor \frac{m}{2} \rfloor$ such that any nonzero element of $W$ is new and general type. Here, $\lfloor r \rfloor$ denotes the greatest integer less than or equal to $r$. Here $\lfloor r \rfloor$  means the greatest integer less than or equal to $r.$

	We present an algorithm that decides whether a $1$-form on $\mathbb{P}^1$ is new or old. Given a pair $(\mathbb{P}^1, \omega),$ if it is a proper pullback of another pair $(\mathbb{P}^1, \eta)$ via a morphism $\phi,$  then we show that the possible branch data of $\phi$ can be determined by $\omega.$ Thus, if one can realize certain branch data  by a morphism $\phi:\mathbb{P}^1\rightarrow \mathbb{P}^1,$ then  the type of the $1$-form $\omega$ can be decided. The realization of branch data by a morphism  is the classical \emph{Hurwitz realization problem} for the Riemann sphere. It is still an open problem.  But for some particular classes of branch data, the answer to the realization problem is known (see Section \ref{hur-prob}).  As a result of that, our algorithm is always applicable to the equations of the form \begin{equation}\label{diff-equ-laurent-poly}
		u'=g(u) \text{ }\text{where}\text{ }g\in \mathbb{C}[Z,Z^{-1}]\text{ }\text{or,}\text{ } \frac{1}{g}\in \mathbb{C}[Z,Z^{-1}].
	\end{equation}
	Consequently, the place of differential equations (\ref{diff-equ-laurent-poly}) can be decided in the classification. Finally, we establish explicit criteria for equations to be new and of general type, and we verify that the equations (\ref{Ros-1}) and (\ref{Ros-2}) satisfy these criteria, thereby providing a new proof that they are of general type and new.

	\textbf{Organization of the Paper.} In Section 2, we briefly review the Hurwitz realization problem, its solutions, and  the existence of meromorphic $1$-forms on curves. Section 3 is about the pullback of pairs and  general type $1$-forms. In Section 4, the main results are proven.  The algorithm and its applications are displayed in Section 5, with some explicit criteria and examples of new and general type equations at the end.
	
	\textbf{Acknowledgements.}
	The author would like to thank Prof. Varadharaj Ravi Srinivasan and Prof. Chetan Balwe for the many helpful discussions, suggestions, and references they provided during the preparation of this article. The author is grateful to IISER Mohali for providing a PhD fellowship.

	\section{Hurwitz realization problem and existence of meromorphic $1$-forms on curves}
	\subsection{Hurwitz realization problem of branch covering for the Riemann sphere.}\label{hur-prob}
	Let $\phi:\mathbb{S}^2\rightarrow \mathbb{S}^2$ be a $d$-fold branch covering, or equivalently, a meromorphic function on the Riemann sphere with $Q_1,\dots, Q_n \in\mathbb{S}^2$ be its branch points. Then for every $i=1,\dots,n,$ there exist $k_i$ points $P_{i,1},\dots,P_{i,k_i}\in \mathbb{S}^2$ and positive integers $d_{i,1},\dots,d_{i,k_i}$ such that \begin{itemize}
		\item $d_{i,j}> 1$ for some $j\in\left\lbrace 1,\dots,k_i\right\rbrace,$
		\item $\sum_{j=1}^{k_i}d_{i,j}=d,$ and
		\item for every $j=1,\dots,k_i$ the map $\phi$ on some neighbourhood of $P_{i,j},$ with $\phi(P_{i,j})=Q_i,$ looks like $z\longmapsto z^{d_{i,j}}.$
	\end{itemize}
	Thus, a branch covering  $\phi$ defines for each of its branch points $Q_i$ a set $\Phi_i=\left\lbrace d_{i,1},\dots, d_{i,k_i}\right\rbrace$ as a partition of $d.$ The collection $\Phi= \left\lbrace \Phi_1,\dots \Phi_n \right\rbrace$ is called the \emph{branch data} of $\phi,$ denoted by $\mathcal{D}(\phi).$ In this language, the Riemann-Hurwitz formula implies
	\begin{equation}\label{Rie-Hur}
		\sum_{i=1}^{n}k_i=(n-2)d+2.
	\end{equation}
	
	\begin{definition} Let $d, n, k_1,\dots, k_n$ be positive integers. An \emph{abstract branch data} $\mathcal{D}:=\mathcal{D}(d;n;k_1,\dots,k_n)$ of degree $d>1$  is a collection  $
		\Phi= \left\lbrace \Phi_1,\dots \Phi_n \right\rbrace$ where each $
		\Phi_i=\left\lbrace d_{i,1},\dots, d_{i,k_i}\right\rbrace $ is a set of positive integers such that $d_{i,j}> 1$ for some $j,\  \sum_{j=1}^{k_i}d_{i,j}=d$ and satisfy the equation (\ref{Rie-Hur}).
	\end{definition}
	A natural existence problem is the following: for which abstract branch data $\mathcal{D},$ does there exist a meromorphic function  on the Riemann sphere such that $\mathcal{D}(\phi)=\mathcal{D}?$ In case it exists, call the branch data $\mathcal{D}$ \emph{realizable}.

	The existence problem for a meromorphic function is a sub-case of the existence problem for branched covering $\phi:M_1\rightarrow M_2$ between closed Riemann surfaces, which is posed by Hurwitz (\cite{Hur91}). Many authors studied this problem; for a survey, see \cite{KB01}. In general, the answer to the above existence problem is negative, i.e., there are non-realizable abstract branch data. For example, the branch data $\left\lbrace \left\lbrace 3,1\right\rbrace ,\left\lbrace 2,2\right\rbrace, \left\lbrace 2,2\right\rbrace \right\rbrace $ is not realizable. There are some important particular cases when the answer to the existence problem is  known:\begin{enumerate}[(i)]
		\item (Polynomial branch data) A branch data $\mathcal{D}=\mathcal{D}(d;n;k_1,\dots,k_n)$ such that there is an $i, 1\leq i\leq n$ for which $\#\Pi_i =1,$ equivalently $k_i=1$ and $d_{i,1}=d$ is realizable. A meromorphic function $\phi$ is polynomial  if and only if there exists $i$ such that $k_i=1$ and $d_{i,1}=d$ in $\mathcal{D}(\phi).$ Hence, the realizability problem is equivalent to the existence of polynomial function. This was proved much earlier in \cite{RT65}.
		\item (Laurent polynomial branch data) A branch data $\mathcal{D}=\mathcal{D}(d;n;k_1,\dots,k_n)$ such that there is an $i, 1\leq i \leq n$ for which $\#\Pi_i=2,$ equivalently $k_i=2$ and $d_{i,1}=k, d_{i,2}=d-k$ for some $k, 1\leq k\leq d,$ is always realizable if $n\geq 4;$ for $n=3$ the complete list of non-realizable data is known. The realizability problem is equivalent to the existence of Laurent polynomials. For reference, see  \cite{PF09}.  
		
		\item All the branch data of degree $d=2,3,5,7$ are realizable, and for $d=4$ there is only one non-realizable data, namely $\left\lbrace \left\lbrace 3,1\right\rbrace ,\left\lbrace 2,2\right\rbrace, \left\lbrace 2,2\right\rbrace \right\rbrace$ (See \cite[Table 2]{Zhe06}). 
	\end{enumerate}

	\subsection{Existence of meromorphic $1$-forms on curves.}\label{exist-1-form} In the rest of the paper, $E$ will mean an elliptic curve defined by the affine equation $y^2=(x-e_1)(x-e_2)(x-e_3);$ $H$ will denote a hyperelliptic curve with affine equation $y^2=(x-e_1)\cdots(x-e_{2g+1}).$ The point at infinity will be denoted by $\infty.$ Here, we record some known results on the existence of a certain class of meromorphic $1$-forms on curves.
	
	Given a divisor $\mathrm{D}$ of degree $-2$ on $\mathbb{P}^1,$ it is easy to construct a $1$-form $\omega$ with $\mathrm{div}(\omega)=\mathrm{D}.$ This existence also follows from \cite[Problems VI.3]{MR95}: Let $\mathrm{D}$ be a divisor on an algebraic curve $X$ of genus $g$ such that $\mathrm{deg}(\mathrm{D})=2g-2$ and $\mathrm{dim}\  \mathcal{L}(\mathrm{D})=g.$ Then $\mathrm{D}$ is a canonical divisor. Therefore, for $X=\mathbb{P}^1,$ any divisor of degree $-2$ is a canonical divisor. For arbitrary curves, there are some classes of $1$-forms whose existence is well known. The next proposition implies the existence of $1$-forms on a curve $X$ satisfying the assumption of Theorem \ref{main-simple-poles}.

	\begin{proposition}\cite[Proposition 1.15, Page 200]{MR95}
		Given an algebraic curve $X,$ a finite set of points $\left\lbrace P_1,\dots, P_n\right\rbrace $ on $X$ and a corresponding subset $\left\lbrace c_1,\dots,c_n\right\rbrace $ of $C,$ there is a meromorphic $1$-form $\omega$ on $X$ whose only poles are simple poles at $P_1,\dots,P_n$ with $\mathrm{Res_{P_i}(\omega)}=c_i$ for each $i,$ if and only if  $\ \sum_{i}^{n}c_i=0.$ 
	\end{proposition}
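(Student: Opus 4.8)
The statement is the classical existence theorem for differentials of the third kind, and the plan is to prove the two implications separately: the residue theorem handles necessity, while a short cohomological computation (via Serre duality) handles sufficiency. For necessity, suppose such an $\omega$ exists. Since $X$ is a compact (projective) curve and the only poles of $\omega$ are the simple poles $P_1,\dots,P_n$, the Residue Theorem gives $\sum_{i=1}^n \mathrm{Res}_{P_i}(\omega)=\sum_i c_i=0$. This direction is immediate.

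For sufficiency, the core is to show that every prescription $(c_1,\dots,c_n)$ with $\sum_i c_i=0$ is actually attained. Set $\mathrm{D}=P_1+\cdots+P_n$ and consider the sheaf $\Omega^1_X(\mathrm{D})$ of meromorphic $1$-forms with at worst simple poles along $\mathrm{D}$. Taking residues at the $P_i$ gives a short exact sequence of sheaves
$$0 \to \Omega^1_X \to \Omega^1_X(\mathrm{D}) \xrightarrow{\mathrm{Res}} \bigoplus_{i=1}^n C_{P_i} \to 0,$$
where $C_{P_i}$ is the skyscraper sheaf at $P_i$ and the kernel is exactly the holomorphic $1$-forms, since a simple pole with vanishing residue is no pole at all. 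Passing to the long exact cohomology sequence, the residue map on global sections $\mathrm{Res}\colon H^0(X,\Omega^1_X(\mathrm{D})) \to \bigoplus_{i=1}^n C$ has image equal to the kernel of the connecting homomorphism $\partial\colon \bigoplus_i C \to H^1(X,\Omega^1_X)$. By Serre duality $H^1(X,\Omega^1_X)\cong H^0(X,\mathcal{O}_X)^\vee\cong C$ is one-dimensional, and one checks that $\partial$ is a nonzero multiple of the summation map $(c_1,\dots,c_n)\mapsto \sum_i c_i$. Hence the image of $\mathrm{Res}$ is precisely the hyperplane $\{\,\sum_i c_i=0\,\}$, producing a $1$-form with the prescribed simple poles and residues.

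The main obstacle is identifying the connecting map $\partial$ with the summation map; this is exactly where the residue theorem reappears in cohomological guise and where Serre duality is essential. As a more hands-on alternative I could bypass the exact sequence by first constructing, via Riemann–Roch, an elementary differential of the third kind $\eta_{ij}$ with residues $+1$ at $P_i$ and $-1$ at $P_j$: applying Riemann–Roch to the divisor class $K_X+P_i+P_j$ (of degree $2g$) together with $H^1(X,\Omega^1_X(P_i+P_j))\cong H^0(X,\mathcal{O}_X(-P_i-P_j))=0$ gives $\dim H^0(X,\Omega^1_X(P_i+P_j))=g+1>g=\dim H^0(X,\Omega^1_X)$, so a genuinely meromorphic form exists, and the residue theorem forces its residues to be $\pm a$ with $a\neq 0$; rescaling yields $\eta_{ij}$. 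Then, for $\sum_i c_i=0$, the combination $\omega=\sum_{i=1}^{n-1} c_i\,\eta_{i,n}$ has residue $c_i$ at each $P_i$ with $i<n$ and residue $-\sum_{i<n}c_i=c_n$ at $P_n$, completing the construction.
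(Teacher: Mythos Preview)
The paper does not give its own proof of this proposition; it is simply quoted from \cite{MR95} as background and used to guarantee the existence of $1$-forms satisfying the hypotheses of Theorem~\ref{main-simple-poles}. So there is nothing to compare against in the paper itself.

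That said, your proof is correct. Both routes you outline are standard. One small remark on the cohomological argument: you do not actually need to identify $\partial$ explicitly with the summation map. It suffices to observe that (a) by the residue theorem the image of $\mathrm{Res}$ is contained in the sum-zero hyperplane, and (b) $\partial$ is nonzero (else $\mathrm{Res}$ would be surjective onto $C^n$, contradicting (a) for $n\geq 2$); since $H^1(X,\Omega^1_X)$ is one-dimensional, $\ker\partial$ then has dimension $n-1$ and, being contained in the sum-zero hyperplane, must equal it. Your second, Riemann--Roch-based construction of elementary differentials $\eta_{ij}$ is fully self-contained and is essentially the argument one finds in Miranda's book.
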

	Next, we shall describe uniformizer parameters on elliptic and hyperelliptic curves and use them to construct a class of meromorphic $1$-forms.

	(1) Elliptic curve: A point $P=(x,y)$ on the elliptic curve $E$ is called special if $P=\mathfrak{I}(P)$ where $\mathfrak{I}(P)$ is the inverse of $P$ in the elliptic group law. Otherwise, $P$ is called ordinary. The elliptic curve $E$ has four special points, namely $T_1=(e_1,0), T_2=(e_2,0), T_3=(e_3,0)$ and $\infty.$ Let $t_P$ be the uniformizer at $P.$ For an ordinary point $P=(x_{P},y_{P}), t_P=x-x_{P}, t_{T_i}=y$ for all $i$ and $t_{\infty}=x/y.$

	On $E,$ define the following class of $1$-forms:  \begin{equation}\label{1-form-elliptic}
		\omega=\frac{(x-x_{P_1})^{u_1}\cdots(x-x_{P_r})^{u_r}}{(x-x_{R_1})\cdots(x-x_{R_n})(x-x_{S_1})^{w_1}\cdots(x-x_{S_m})^{w_m}}(x-e_1)^{k_1}(x-e_2)^{k_2}(x-e_3)^{k_3}y^ldx\end{equation} where $u_i\geq 1, w_i\geq2$ positive integers, $k_1, k_2, k_3, l$ are integers, and $P_i=(x_{P_i}, y_{P_i}), R_j=(x_{R_j},y_{R_j}), S_k=(x_{S_k}, y_{S_k})$ are distinct ordinary points of $E.$ 
	Then \begin{align*}
		\mathrm{div}(\omega)= &\sum_{i=1}^{r}u_i( P_i + \mathfrak{I}(P_i) )-\sum_{i=i}^{n}( R_i + \mathfrak{I}(R_i) )-\sum_{i=1}^{m}w_i( S_i + \mathfrak{I}(S_i)) \\ & +\sum_{i=1}^{3}(2k_i+l+1) T_i+(2n+\sum_{i=1}^{m}2w_i-\sum_{i=1}^{n}2u_i-\sum_{i=1}^{3}2k_i-3l-3) \infty.
	\end{align*}

	(2) Uniformizer of hyperelliptic curve: An \emph{hyperelliptic involution}  is a morphism $\mathfrak{I}:H\rightarrow H$  defined by $\mathfrak{I}(x,y)=(x,-y)$ for $(x,y)\in H.$ A point $P\in H$ is called special if $\mathfrak{I}(P)=P.$ Otherwise, $P$ is called ordinary. The curve $H$ has $2g+2$ special points, namely $T_i=(e_i, 0), i=1,\dots, 2g+1,$ and $\infty.$ Let $t_P$ be the uniformizer at $P.$ For an ordinary point  $P=(x_{P},y_{P}), t_P=x-x_P, t_{T_i}=y$ for all $i$ and $t_{\infty}=x^g/y.$

	On $H,$ define the following class of $1$-forms: \begin{equation}\label{1-form-H}
		\omega=\frac{(x-x_{P_1})^{u_1}\cdots(x-x_{P_r})^{u_r}}{(x-x_{R_1})\cdots(x-x_{R_n})(x-x_{S_1})^{w_1}\cdots(x-x_{S_m})^{w_m}}(x-e_1)^{k_1}\cdots(x-e_{2g+1})^{k_{2g+1}}y^ldx
	\end{equation}
	where $u_i\geq 1, w_i\geq 2$ positive integers, $k_1, k_2, k_3, l$ are integers, and $P_i=(x_{P_i}, y_{P_i}), R_j=(x_{R_j},y_{R_j}), S_k=(x_{S_k}, y_{S_k})$ are distinct points of $H.$ Then \begin{align*}
		\mathrm{div}(\omega)= &\sum_{i=1}^{r}u_i( P_i + \mathfrak{I}(P_i)  )-\sum_{i=i}^{n}( R_i + \mathfrak{I}(R_i) )-\sum_{i=1}^{m}w_i( S_i + \mathfrak{I}(S_i)) \\ &+\sum_{i=1}^{2g+1}(2k_i+l+1) T_i  +(2n+\sum_{i=1}^{m}2w_i-\sum_{i=1}^{n}2u_i-\sum_{i=1}^{2g+1}2k_i-l(2g+1)-3) \infty.
	\end{align*}
	
	In the class of $1$-forms defined above, if $l$ is either zero or an even integer, then $\omega$ is defined in the subfield $C(x)$ of $ C(x,y).$ Hence $\omega$ is an old form. If $l$ is an odd integer, the order of $\omega$ at the special points is either zero or an even integer. Then it follows that given numbers $r> 0, n\geq0, m\geq 2$ such that $n$ is either zero or even number; one can construct a $1$-form on an elliptic or hyperelliptic curve having support at special points with $r$ number of zeros, $n$ number of simple poles, and $m$ number of poles of order $\geq 2.$ Indeed if $r=2p+1$ odd, one can use a special point and $2p$ number of ordinary points.

	\section{ Pullback of pairs and general type $1$-forms}\label{gen-type-equ} In this section, we note some observations on the behaviour of zeros, poles, and residues of $1$-forms under pullback. A pair $(X,\omega)$ is called \emph{pullback} of another pair $(Y,\eta)$ if there is a nonconstant morphism $\phi:X\rightarrow Y$ such that $\phi^*\eta=\omega.$ Denote it by $(X, \omega)\overset{\phi}{\longrightarrow}(Y, \eta).$ Ramification index at a point $P$ will be denoted by $e_{P}.$ 
	\begin{lemma}
		Let $(X, \omega)\overset{\phi}{\longrightarrow}(Y, \eta)$ with $\phi(P)=Q.$ Then $e_{P}(\mathrm{ord}_{Q}(\eta)+1)=\mathrm{ord}_{P}(\omega)+1.$ 
		
		\begin{proof} Let $m=\mathrm{ord}_{Q}(\eta).$ Write $\eta= ht^mdt,$ where $t$ is a uniformization parameter at $Q$ with $h(Q)\neq 0.$ We have $\phi^*t= us^{e_P},$ where $s$ is a uniformization parameter at $P$ with $u(P)\neq 0, \infty.$ Then $$\omega=\phi^*\eta = (\phi^*h)(\phi^*t^m)d(us^{e_P})= (\phi^*h)(u^ms^{e_Pm})[e_Pus^{{e_P}-1}+(du/ds)s^{e_P}]ds,$$ implies \begin{equation}\label{ram-ord}
				\mathrm{ord}_{P}(\omega)+1=e_{P}(\mathrm{ord}_{Q}(\eta)+1).
			\end{equation}
		\end{proof}
	\end{lemma}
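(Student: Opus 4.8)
The plan is to reduce the statement to a purely local computation around $P$ and $Q$, since $\mathrm{ord}_P(\omega)$, $\mathrm{ord}_Q(\eta)$ and the ramification index $e_P$ are all local invariants. First I would fix a uniformizer $t$ at $Q$ on $Y$ and a uniformizer $s$ at $P$ on $X$. Setting $m=\mathrm{ord}_Q(\eta)$, the definition of the order of a meromorphic $1$-form lets me write $\eta = h\,t^m\,dt$ locally at $Q$, where $h$ is a unit in the local ring, i.e. $h(Q)\neq 0,\infty$. The whole argument then amounts to computing $\phi^*\eta$ and reading off the order in $s$ of the leading term.

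Second, I would feed in the local description of $\phi$ at $P$: by definition of the ramification index, $\phi^* t = u\, s^{e_P}$ for some local unit $u$ with $u(P)\neq 0,\infty$. Pulling back factor by factor, $\phi^* h$ remains a unit at $P$ (the pullback of a unit is a unit), $\phi^*(t^m)=u^m s^{e_P m}$, and $\phi^*(dt)=d(u s^{e_P})=\bigl(e_P u\, s^{e_P-1}+(du/ds)\,s^{e_P}\bigr)\,ds$. Assembling these and factoring $s^{e_P-1}$ out of the bracket gives
$$\omega = \phi^*\eta = (\phi^* h)\,u^m s^{e_P m}\,s^{e_P-1}\bigl(e_P u + (du/ds)\,s\bigr)\,ds.$$
The exponent of $s$ in front is $e_P m + (e_P-1)=e_P(m+1)-1$, so once I know the remaining coefficient is a unit at $P$, the identity $\mathrm{ord}_P(\omega)+1 = e_P(\mathrm{ord}_Q(\eta)+1)$ follows immediately by reading off the $s$-adic order.

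The one point that needs care — the main (mild) obstacle — is verifying that the coefficient $(\phi^* h)\,u^m\bigl(e_P u + (du/ds)s\bigr)$ does not vanish at $P$, so that no hidden cancellation raises the order. The first two factors are units by construction; evaluating the bracket at $s=0$ leaves $e_P\,u(P)$. Since we work over a field of characteristic zero, the positive integer $e_P$ is nonzero in $C$ and $u(P)\neq 0$, so the coefficient is a genuine unit. This is precisely where the characteristic-zero hypothesis is used: if the characteristic divided $e_P$, the leading term would collapse and the formula could fail. With this nonvanishing checked, the order computation is complete.
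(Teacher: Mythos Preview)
Your proof is correct and follows essentially the same approach as the paper: fix uniformizers, write $\eta=ht^m\,dt$, pull back via $\phi^*t=us^{e_P}$, and read off the $s$-adic order. Your version is in fact slightly more careful than the paper's, since you explicitly verify that the coefficient $e_P\,u(P)$ is nonzero (invoking characteristic zero), whereas the paper leaves this implicit.
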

	
	\begin{lemma}\label{obv}
		Let $(X, \omega)\overset{\phi}{\longrightarrow}(Y, \eta)$ with $\mathrm{div}(\omega)= \sum_{i=1}^{u_\omega}n_iP_{\omega i}-\sum_{j=1}^{v_\omega}R_{\omega j}-\sum_{k=1}^{w_\omega} m_kS_{\omega k}$ and $\mathrm{div}(\eta)= \sum_{i=1}^{u_\eta}n_iP_{\eta i}-\sum_{j=1}^{v_\eta}R_{\eta j}-\sum_{k=1}^{w_\eta} m_kS_{\eta k}$ where all $m_k\geq 2$ for all $k.$ Then the following holds: 
		\begin{enumerate}[(i)]
			\item\label{obv-simple-pole} for all $j,$  $\phi(R_{\omega j})\in \left\lbrace R_{\eta 1},\dots, R_{\eta v_\eta}\right\rbrace $ and $\phi^{-1}(R_{\eta j})\subseteq \left\lbrace R_{\omega 1},\dots, R_{\omega v_\omega}\right\rbrace ;$ 
			\item \label{obv-ram-ind-&-highere-pole} for all $k,$ $e_{S_{\omega k}}$ divides $\left| \mathrm{ord}_{S_{\omega k}}(\omega)+1\right|,  \phi(S_{\omega k})\in \left\lbrace S_{\eta 1},\dots,  S_{\eta w_\eta}\right\rbrace $ and $\phi^{-1}(S_{\eta k})\subseteq \left\lbrace S_{\omega 1},\dots, S_{\omega w_\omega}\right\rbrace ;$ 
			\item\label{obv-zero}
			for all $i, e_{P_{\omega i}}$ divides $\mathrm{ord}_{P_{\omega i}}(\omega)+1$ and $\phi^{-1}(P_{\eta i})\subseteq \left\lbrace P_{\omega 1},\dots,P_{\omega u_\omega} \right\rbrace.$ Also for any $i, \phi(P_{\omega i})$ is not a zero of $\eta$ if and only if $e_{P_{\omega i}}=\mathrm{ord}_{P_{\omega i}}(\omega)+1;$   
			\item\label{obv-fiber} if $ S_{\omega k_1}, S_{\omega k_2} \in \phi^{-1}(S_{\eta j})$ for some $ k_1, k_2$ then 
			\begin{equation*}\label{equ-ram-fiber}
				\frac{\mathrm{ord}_{S_{\omega k_1}}(\omega)+1}{e_{S_{\omega k_1}}}=\frac{\mathrm{ord}_{S_{\omega k_2}}(\omega)+1}{e_{S_{\omega k_2}}}.
			\end{equation*}The same is true for zeros for $\omega$ and $\eta$ also.
			\item\label{obv-residue} If $Q$ is a pole of $\eta$ with residue $a$, then each $P\in \phi^{-1}(Q)$ is a pole of $\omega$ with residue $e_{P} a.$
		\end{enumerate}
		\begin{proof}
			The proof follows from equation (\ref{ram-ord}).
		\end{proof}
	\end{lemma}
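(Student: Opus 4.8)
The engine behind all five clauses is the order relation (\ref{ram-ord}), which I find it convenient to write as $\nu_\omega(P)=e_P\,\nu_\eta(\phi(P))$, where $\nu(P):=\mathrm{ord}_P(\,\cdot\,)+1$ for the form in question. Since $e_P\geq 1$, this relation preserves both the sign of $\nu$ and the vanishing of $\nu$. The plan is to record the elementary dictionary $\nu=0$ at a simple pole, $\nu<0$ at a pole of order $\geq 2$, $\nu=1$ at a point that is neither a zero nor a pole, and $\nu\geq 2$ at a zero, and then to read off (i)--(iv) directly from the multiplicativity of $\nu$; only the residue statement (v) requires a genuine local computation.

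For (i), a simple pole has $\nu_\omega(R_{\omega j})=0$, so $e_{R_{\omega j}}\,\nu_\eta(\phi(R_{\omega j}))=0$ forces $\nu_\eta(\phi(R_{\omega j}))=0$, i.e.\ $\phi(R_{\omega j})$ is a simple pole of $\eta$; conversely over a simple pole $R_{\eta j}$ we get $\nu_\omega=e_P\cdot 0=0$ at every preimage. For (ii), since $\nu_\omega(P)$ and $\nu_\eta(\phi(P))$ are simultaneously negative, a point is a pole of order $\geq 2$ for $\omega$ if and only if its image is one for $\eta$; this yields both the membership $\phi(S_{\omega k})\in\{S_{\eta 1},\dots,S_{\eta w_\eta}\}$ and the containment $\phi^{-1}(S_{\eta k})\subseteq\{S_{\omega 1},\dots,S_{\omega w_\omega}\}$, while the divisibility $e_{S_{\omega k}}\mid(\mathrm{ord}_{S_{\omega k}}(\omega)+1)$ is simply the integrality of $\nu_\eta(\phi(P))=\nu_\omega(P)/e_P$. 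For (iii), if $\phi(P)$ is a zero of $\eta$ then $\nu_\eta(\phi(P))\geq 2$ forces $\nu_\omega(P)\geq 2$, so $P$ is a zero of $\omega$, giving $\phi^{-1}(P_{\eta i})\subseteq\{P_{\omega 1},\dots,P_{\omega u_\omega}\}$, and the same divisibility argument applies. Here the converse genuinely fails---a zero of $\omega$ may map to a regular point of $\eta$---which is exactly the content of the last clause: $\phi(P_{\omega i})$ fails to be a zero of $\eta$ precisely when $\nu_\eta(\phi(P_{\omega i}))=1$, which by the order relation is the equality $e_{P_{\omega i}}=\mathrm{ord}_{P_{\omega i}}(\omega)+1$. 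Part (iv) is immediate, since for $S_{\omega k_1},S_{\omega k_2}$ in a common fiber $\phi^{-1}(S_{\eta j})$ both displayed quotients equal $\nu_\eta(S_{\eta j})=\mathrm{ord}_{S_{\eta j}}(\eta)+1$; the argument for zeros is identical.

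The only clause not formally forced by orders is the residue statement (v), which I would prove by the same local calculation that produced (\ref{ram-ord}). First, if $Q$ is a pole of $\eta$ then $\nu_\eta(Q)\leq 0$, so $\nu_\omega(P)=e_P\,\nu_\eta(Q)\leq 0$ and each $P\in\phi^{-1}(Q)$ is indeed a pole of $\omega$. For the residue itself, decompose $\eta$ near $Q$ as $\eta=a\,\frac{dt}{t}+d\psi+h\,dt$, where $a=\mathrm{Res}_Q(\eta)$, the term $\psi=\sum_{j\geq 2}\tfrac{a_{-j}}{1-j}t^{1-j}$ gathers the higher-order principal part (so $d\psi$ is exact), and $h$ is holomorphic at $Q$. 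Writing $\phi^*t=us^{e_P}$ with $u(P)\neq 0,\infty$ gives
\begin{equation*}
\phi^*\!\left(a\,\frac{dt}{t}\right)=a\,\frac{d(us^{e_P})}{us^{e_P}}=a\left(e_P\,\frac{ds}{s}+\frac{du}{u}\right),
\end{equation*}
whose residue at $P$ is $e_P a$, because $du/u$ is holomorphic there; the pullback $d(\phi^*\psi)$ of the exact part is again exact and contributes no residue, and $(\phi^*h)\,\phi^*(dt)$ is holomorphic at $P$. Hence $\mathrm{Res}_P(\omega)=e_P a$. I expect this residue bookkeeping to be the sole point of real care: one must isolate the logarithmic term and invoke that pullback sends exact forms to exact forms, so that the higher-order principal part makes no contribution and the residue scales exactly by the ramification index $e_P$.
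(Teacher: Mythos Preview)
Your proof is correct and follows the paper's approach: the paper's own proof is the single line ``The proof follows from equation (\ref{ram-ord}),'' and your argument is precisely a careful unpacking of that claim via the sign and divisibility consequences of the relation $\mathrm{ord}_P(\omega)+1=e_P(\mathrm{ord}_{\phi(P)}(\eta)+1)$. Your explicit local computation for (v)---splitting $\eta$ into logarithmic, exact, and holomorphic pieces---actually goes beyond what the paper writes down, since the order relation alone does not determine residues; this is a correct and welcome addition rather than a deviation.
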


	\begin{proposition}\label{key-proposition}\begin{enumerate}[(i)]
			\item If $(X,\omega)$ is of exact type, then $\omega$ has a pole of order $\geq 2,$ and all poles of $\omega$ are of order $\geq 2$ with zero residues.
			\item If $(X,\omega)$ is of exponential type, then $\omega$ has a simple pole, and all poles of $\omega$ are simple with  residues $cm_i,$ for some unique $c\in C$ and $m_i\in\mathbb{Z}$ vary with simple poles.   
			\item If $(X,\omega)$ is of Weierstrass type, then $\omega$ is holomorphic.
			\item If $\omega\in \Omega_X^1$ has a pole of order $\geq 2$ with nonzero residue, then $(X,\omega)$  is of general type.
			\item If $\omega\in \Omega_X^1$ has a pole of order $\geq 2$ and a simple pole, then $(X,\omega)$  is of  general type.
		\end{enumerate}
		\begin{proof}
			By definition, a pair $(X,\omega)$ is of exact type (respectively of exponential type, respectively of  Weierstrass type) if and only if $(X,\omega)$ is pullback of the pair $(\mathbb{P}^1, dx)$ (respectively $(\mathbb{P}^1, \frac{dx}{cx}),$ respectively $(E,\frac{dx}{y})$). The $1$-form $dx$ has only one pole of order $\geq 2$ with zero residues and the $1$-form $\frac{dx}{cx}$ has only  two simple poles with nonzero residues. The $1$-form $\frac{dx}{y}$ on $E$ is holomorphic. Now the proof of the proposition follows from Lemma \ref{obv}.  
		\end{proof}
	\end{proposition}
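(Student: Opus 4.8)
The plan is to realize each of the three special types as a pullback of one explicit model pair and then transport the pole-and-residue data of $\omega$ from that of the model through Lemma~\ref{obv} and equation~\eqref{ram-ord}. First I would record the characterizations that are essentially built into the definitions: $(X,\omega)$ is of exact type exactly when it is a pullback of $(\mathbb{P}^1,dx)$, of exponential type exactly when it is a pullback of $(\mathbb{P}^1,\tfrac{dx}{cx})$ for some $c\in C\setminus 0$, and of Weierstrass type exactly when it is a pullback of $(E,\tfrac{dx}{y})$. Both directions are immediate: if $\omega=dh$ then the morphism $h\colon X\to\mathbb{P}^1$ (nonconstant, since $\omega\neq 0$) satisfies $h^{*}(dx)=dh=\omega$, while a pullback of $(\mathbb{P}^1,dx)$ along $\phi$ gives $\omega=\phi^{*}dx=d(\phi^{*}x)$; the exponential and Weierstrass cases are identical computations with $h$, respectively the pair $(h,g)$, replacing $x$, respectively $(x,y)$.

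Next I would compute the divisors and residues of the three models. Writing $s=1/x$ near $\infty$ gives $dx=-s^{-2}\,ds$, so $dx$ on $\mathbb{P}^1$ has a single pole, of order two at $\infty$, with residue $0$. The form $\tfrac{dx}{cx}$ has exactly two poles, both simple, at $0$ and $\infty$, with residues $\tfrac{1}{c}$ and $-\tfrac{1}{c}$. Finally $\tfrac{dx}{y}$ is the invariant holomorphic differential on $E$; being a nonzero holomorphic $1$-form on a curve of genus one, its divisor has degree $2g-2=0$, so it has no zeros and no poles.

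Parts (i)--(iii) then follow by applying Lemma~\ref{obv} and~\eqref{ram-ord} to the relevant $\phi$. For (i), any pole of $\omega$ lies over a pole of $dx$ by~\eqref{ram-ord}, hence over $\infty$; since $\infty$ is not simple, $\omega$ has no simple poles, and each pole of $\omega$ acquires residue $e_{P}\cdot 0=0$; surjectivity of the nonconstant $\phi$ makes $\phi^{-1}(\infty)$ nonempty, and~\eqref{ram-ord} shows any point above $\infty$ is a pole of order $\geq 2$. For (ii), the poles of $\omega$ lie over the simple poles $0,\infty$, so~\eqref{ram-ord} forces every pole of $\omega$ to be simple, with residue $e_{P}\cdot(\pm\tfrac{1}{c})$; taking the constant to be $\tfrac{1}{c}$ displays every residue as $m_i\cdot\tfrac{1}{c}$ with $m_i=\pm e_{P}\in\mathbb{Z}$, and surjectivity again yields at least one simple pole. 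For (iii), $\tfrac{dx}{y}$ has no poles, so by~\eqref{ram-ord} neither does $\omega$, i.e.\ $\omega$ is holomorphic.

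Parts (iv) and (v) are the contrapositives. A pole of order $\geq 2$ with nonzero residue rules out exact type (poles have zero residue), exponential type (poles are simple) and Weierstrass type (no poles), leaving general type and proving (iv); the simultaneous presence of a simple pole and a pole of order $\geq 2$ rules out exact type (no simple poles), exponential type (no higher poles) and Weierstrass type (no poles), giving (v). I expect the only genuinely delicate points to be the residue bookkeeping in (ii)---keeping track that poles over $0$ contribute $+e_{P}/c$ while those over $\infty$ contribute $-e_{P}/c$---and the appeal to surjectivity of nonconstant morphisms of smooth projective curves, which is exactly what upgrades each conclusion from \emph{``$\omega$ can only have poles of the stated kind''} to \emph{``$\omega$ actually has such a pole.''}
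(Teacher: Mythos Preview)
Your proposal is correct and follows essentially the same approach as the paper: identify each of the three special types with a pullback of an explicit model $(\mathbb{P}^1,dx)$, $(\mathbb{P}^1,\tfrac{dx}{cx})$, $(E,\tfrac{dx}{y})$, compute the pole and residue data of each model, and transport that data through Lemma~\ref{obv} (with (iv)--(v) as contrapositives). The only difference is that you spell out more detail---the equivalence of the definitions with the pullback descriptions, the local expansion at $\infty$, and the appeal to surjectivity for existence of poles---whereas the paper simply asserts these facts and defers to Lemma~\ref{obv}.
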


	\begin{corollary}\label{rat-aut}(\cite[Remark 7.5]{KRS24})\begin{enumerate}[(i)]
			\item $(\mathbb{P}^1, \frac{dx}{g(x)})$ is of exact type if and only if the partial fraction expression of $\frac{1}{g(x)}$ is of the form
			\begin{equation*}
				h(x)+\sum_{i=1}^{n}\sum_{j=2}^{n_i}\frac{d_{ij}}{(x-c_i)^j}
			\end{equation*} where $d_{ij}, c_i\in C$ and $h(u)$ is a polynomial over $C.$ Furthermore, $(\mathbb{P}^1, \frac{dx}{g(x)})$ is of exact type and new if and only if $\frac{1}{g(x)}=\frac{d}{(x-c)^2}$ for some $c, d\in C.$  
			
			\item $(\mathbb{P}^1, \frac{dx}{g(x)})$  is of exponential type if and only if the partial fraction of $\frac{1}{g(x)}$ is of the form 
			\begin{equation*}
				c\sum_{i=1}^{n}\frac{m_i}{(x-c_i)}
			\end{equation*}where $c_i,c\in C$ and $m_i$ are nonzero integers. 
		\end{enumerate}
		\begin{proof}
			The proof follows from Proposition \ref{key-proposition}.
		\end{proof}
	\end{corollary}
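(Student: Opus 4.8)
The plan is to read both characterizations off the partial fraction decomposition
\[
\frac{1}{g(x)} = h(x) + \sum_{i=1}^{n}\sum_{j=1}^{n_i}\frac{d_{ij}}{(x-c_i)^j},\qquad d_{i,n_i}\neq 0,
\]
using the observation that the finite poles of $\omega=\frac{dx}{g(x)}$ are exactly the $c_i$, that the pole at $c_i$ has order $n_i$ with $\mathrm{Res}_{c_i}(\omega)=d_{i1}$, and that (passing to the coordinate $t=1/x$, under which $dx=-t^{-2}dt$) a nonzero polynomial part $h$ of degree $N$ produces a pole of order $N+2\geq 2$ at $\infty$. This dictionary turns the polar and residue information supplied by Proposition~\ref{key-proposition} into conditions on the coefficients $d_{ij}$ and on $h$.

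For part (i), suppose first that $(\mathbb{P}^1,\omega)$ is of exact type. By Proposition~\ref{key-proposition}(i) every pole of $\omega$ has order $\geq 2$ and zero residue; vanishing residue at each finite pole $c_i$ reads $d_{i1}=0$, which is exactly the asserted shape of the partial fraction. Conversely, given that shape, I would integrate term by term: the polynomial part has a polynomial antiderivative, and each $\frac{d_{ij}}{(x-c_i)^j}$ with $j\geq 2$ integrates to the rational function $\frac{-d_{ij}}{(j-1)(x-c_i)^{j-1}}$. Hence $\omega=dF$ for a rational $F\in C(x)$, so $(\mathbb{P}^1,\omega)$ is exact.

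The ``furthermore'' clause is where the real work lies. If $\omega=dF$ is exact, then $F\colon\mathbb{P}^1\to\mathbb{P}^1$ satisfies $F^*(dx)=dF=\omega$, so $(\mathbb{P}^1,\omega)$ is a pullback of $(\mathbb{P}^1,dx)$ of degree $\deg F$; when $\deg F\geq 2$ this pullback is proper and $\omega$ is old. Thus exactness together with newness forces $\deg F=1$, i.e. $F$ is a M\"obius transformation, and differentiating gives $\omega=\frac{d}{(x-c)^2}dx$, that is $\frac{1}{g}=\frac{d}{(x-c)^2}$. For the converse I would argue directly that this $\omega$ admits no proper pullback: its divisor is $\mathrm{div}(\omega)=-2[c]$ (a single double pole, no zeros and no pole at $\infty$), so in the ramification identity \eqref{ram-ord} the quantity $\mathrm{ord}_P(\omega)+1$ equals $-1$ at $P=c$ and $1$ at every other point, and in either case it forces $e_P=1$. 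Any $\phi$ with $\phi^*\eta=\omega$ is therefore unramified everywhere, and Riemann--Hurwitz gives $-2=\deg(\phi)\,(2g_Y-2)$, which has no solution with $\deg\phi\geq 2$. Hence $\omega$ is new. I expect this Riemann--Hurwitz step to be the main obstacle, since newness is a statement about \emph{all} pairs $(Y,\eta)$, not merely about the tautological pullback from $(\mathbb{P}^1,dx)$.

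For part (ii), suppose $(\mathbb{P}^1,\omega)$ is of exponential type. By Proposition~\ref{key-proposition}(ii) every pole of $\omega$ is simple, so $n_i=1$ for all $i$ and the polynomial part $h$ must vanish, for otherwise it would contribute a pole of order $\deg h+2\geq 2$ at $\infty$, contradicting simplicity. The same proposition says the residues are $cm_i$ for one common $c\in C$ and integers $m_i$, so $d_{i1}=cm_i$ and the partial fraction collapses to $\frac{1}{g}=c\sum_i\frac{m_i}{x-c_i}$. Conversely, for this form I would set $P(x)=\prod_i(x-c_i)^{m_i}\in C(x)^\times$; then $\frac{dP}{P}=\sum_i m_i\frac{dx}{x-c_i}$ yields $\omega=c\,\frac{dP}{P}=\frac{dP}{(1/c)P}$, which is of exponential type by definition. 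Both implications thus reduce to Proposition~\ref{key-proposition} together with the elementary integration and logarithmic-derivative computations.
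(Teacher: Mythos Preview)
Your proof is correct and follows the same route the paper indicates: reading the polar orders and residues of $\omega$ off the partial fraction decomposition and matching them against the constraints of Proposition~\ref{key-proposition}, with explicit antiderivatives and the logarithmic derivative $dP/P$ supplying the converse directions. The paper's one-line proof defers all of this to Proposition~\ref{key-proposition}; you have additionally spelled out the newness argument in the ``furthermore'' clause via the ramification identity~\eqref{ram-ord} and Riemann--Hurwitz, which is indeed not a formal consequence of Proposition~\ref{key-proposition} alone but is exactly the kind of argument the paper uses elsewhere (cf.\ the proof of Corollary~\ref{existence-in-omega-D}(ii)).
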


	\begin{proposition}\label{gen-type-prop}(\cite[Proposition  3.3]{NvdPT22})\begin{enumerate}[(i)]
			\item If $(X, \omega)\overset{\phi}{\longrightarrow}(Y, \eta)$ and $(X,\omega)$ is of general type then $(Y,\eta)$ is also of general type.
			\item If $(X,\omega)$ is of general type, then $\omega$ has at least one zero. 
		\end{enumerate}  
	\end{proposition}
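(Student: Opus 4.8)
The plan is to prove both parts by contraposition, using the characterization recorded in the proof of Proposition \ref{key-proposition}: a pair fails to be of general type precisely when it is a pullback of one of the three \emph{standard} pairs $(\mathbb{P}^1,dx)$, $(\mathbb{P}^1,\tfrac{dx}{cx})$, or $(E,\tfrac{dx}{y})$. Equivalently, I may work directly from the field-theoretic definitions of the exact, exponential, and Weierstrass types.

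For part (i) I would argue by transitivity of pullback. Suppose $(X,\omega)\overset{\phi}{\longrightarrow}(Y,\eta)$ with $(X,\omega)$ of general type, and assume toward a contradiction that $(Y,\eta)$ is not of general type. Then $(Y,\eta)$ is of exact, exponential, or Weierstrass type, so there is a nonconstant morphism $\psi\colon Y\to Z$ onto the corresponding standard pair $(Z,\zeta)$ with $\psi^*\zeta=\eta$. By functoriality of pullback, $(\psi\circ\phi)^*\zeta=\phi^*(\psi^*\zeta)=\phi^*\eta=\omega$, and $\psi\circ\phi$ is nonconstant; hence $(X,\omega)$ is itself a pullback of $(Z,\zeta)$, and therefore of the same non-general type, contradicting the hypothesis. (Here $\eta\neq0$ since $\omega=\phi^*\eta\neq0$.) This part is immediate once the characterization is available; the only thing to verify is that the composite morphism is again nonconstant, which is automatic.

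For part (ii) I would again argue contrapositively: assuming $\omega$ has no zeros, I show $(X,\omega)$ is of exact, exponential, or Weierstrass type, organized by the genus $g$ of $X$ and driven by the identity $\deg\mathrm{div}(\omega)=2g-2$. If $g\geq2$, then $\deg\mathrm{div}(\omega)=2g-2>0$ while a zero-free form has $\mathrm{div}(\omega)\leq0$, so this case cannot occur and there is nothing to prove. If $g=1$, the degree is $0$, which forces $\omega$ to have no poles either; thus $\omega$ is a nonzero holomorphic differential, so $\omega=c\,\tfrac{dx}{y}$ for some $c\in C\setminus0$, and the substitution $h=x/c^2,\ g=y/c^3$ exhibits $\omega=dh/g$ with $g^2=h^3+(a/c^4)h+(b/c^6)$, so $(E,\omega)$ is of Weierstrass type. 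If $g=0$, the total pole order equals $2$, so $\omega$ has either a single double pole or two distinct simple poles; reading off the divisor of $\omega/dx$ and integrating, in the first case $\omega=\tfrac{c}{(x-a)^2}\,dx=d\!\left(\tfrac{-c}{x-a}\right)$ is exact, and in the second $\omega=\tfrac{dh}{ch}$ with $h=\tfrac{x-a}{x-b}$ is of exponential type.

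The only genuinely delicate point is the explicit identification in the low-genus cases of part (ii): recognizing that the two admissible polar configurations on $\mathbb{P}^1$ are exactly the exact and exponential models, and verifying that an \emph{arbitrary} scalar multiple of the holomorphic differential on $E$ remains of Weierstrass type. The latter is where the coordinate rescaling does the work, together with the check that $4a^3+27b^2\neq0$ implies $4(a/c^4)^3+27(b/c^6)^2\neq0$, so the discriminant condition in the definition of Weierstrass type is preserved. Part (i), and the genus $\geq2$ subcase of part (ii), are purely formal.
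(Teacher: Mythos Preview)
The paper does not supply its own proof of this proposition; it merely records the statement and cites \cite[Proposition~3.3]{NvdPT22}. So there is nothing in the present paper to compare your argument against.

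That said, your proof is correct. Part~(i) is the transitivity argument one expects: the characterization you invoke from the proof of Proposition~\ref{key-proposition} (each non-general type is a pullback of a fixed standard pair) is exactly what is needed, and composing with $\phi$ finishes it. Part~(ii) is a clean case split on the genus, driven by $\deg\mathrm{div}(\omega)=2g-2$. The only places worth tightening are cosmetic: in the $g=1$ case you silently identify $X$ with a Weierstrass model $E$ (legitimate over an algebraically closed field, but worth one sentence), and in the $g=0$ case the pole-at-$\infty$ subcases should be acknowledged (they are handled by the same formulas after a M\"obius change of coordinate). None of this affects the validity of the argument.
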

	\begin{proposition}\label{deg-bound}
		Let $(X, \omega)\overset{\phi}{\longrightarrow}(Y, \eta)$ and $\omega$ be general type. Then $\mathrm{deg}(\phi)\leq \frac{1}{2}(\mathrm{deg}(\mathrm{div}_0(\omega))+m) ,$ where $\mathrm{div}_0(\omega)$ denotes the divisor of zeros of $\omega$ and $m$ is the number of zeros of $\omega.$
		
		\begin{proof}
			As $\omega$ is general type so is $\eta.$ Let $Q$ be a zero of $\eta$ and $\phi^{-1}(Q)=\left\lbrace P_1,P_2,\dots,P_n\right\rbrace.$ By Lemma \ref{obv}, for each $i, 1\leq i\leq n,$ $e_{P_{i}}$ is a proper divisor of $\mathrm{ord}_{P_{i}}(\omega)+1,$ implies $e_{P_i}\leq \frac{1}{2}(\mathrm{ord}_{P_{i}}(\omega)+1).$ 	Hence\begin{equation*}
				\mathrm{deg}(\phi)=\sum_{P_{i}\in \phi^{-1}(Q)}e_{P_{i}}\leq \sum_{i=1}^{n}\frac{1}{2}(\mathrm{ord}_{P_{i}}(\omega)+1)\leq \frac{1}{2}(\mathrm{deg}(\mathrm{div}_0(\omega))+m).
			\end{equation*} 
		\end{proof} 
	\end{proposition}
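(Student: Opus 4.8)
The plan is to derive the bound by writing $\deg(\phi)$ as a sum of ramification indices over one carefully chosen fibre, namely a fibre lying above a zero of $\eta$. First I would promote the general-type hypothesis from $\omega$ to $\eta$: since $(X,\omega)\overset{\phi}{\longrightarrow}(Y,\eta)$ with $\omega$ of general type, Proposition \ref{gen-type-prop}(i) gives that $(Y,\eta)$ is of general type as well, and then Proposition \ref{gen-type-prop}(ii) guarantees that $\eta$ has at least one zero $Q$, so $\mathrm{ord}_Q(\eta)\geq 1$. This zero $Q$ is the point whose fibre I will exploit.

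Next I would analyse the fibre $\phi^{-1}(Q)=\{P_1,\dots,P_n\}$. For each $P_i$ the local ramification identity (\ref{ram-ord}) reads $\mathrm{ord}_{P_i}(\omega)+1=e_{P_i}\bigl(\mathrm{ord}_Q(\eta)+1\bigr)$. Because $\mathrm{ord}_Q(\eta)+1\geq 2$, this simultaneously shows that $\mathrm{ord}_{P_i}(\omega)\geq 1$, so each $P_i$ is genuinely a zero of $\omega$, and that $e_{P_i}$ is a proper divisor of $\mathrm{ord}_{P_i}(\omega)+1$; in particular $e_{P_i}\leq\tfrac12(\mathrm{ord}_{P_i}(\omega)+1)$. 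This is precisely Lemma \ref{obv}(iii) read at a point mapping to a zero of $\eta$.

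Finally I would assemble the estimate. Summing ramification indices over the fibre of the nonconstant morphism $\phi$ recovers its degree, so
\begin{equation*}
\deg(\phi)=\sum_{i=1}^{n}e_{P_i}\leq\sum_{i=1}^{n}\tfrac12\bigl(\mathrm{ord}_{P_i}(\omega)+1\bigr)=\tfrac12\Bigl(\sum_{i=1}^{n}\mathrm{ord}_{P_i}(\omega)+n\Bigr).
\end{equation*}
Since the $P_i$ are distinct zeros of $\omega$, the sum $\sum_{i}\mathrm{ord}_{P_i}(\omega)$ is at most $\deg(\mathrm{div}_0(\omega))$, and the cardinality $n=\#\phi^{-1}(Q)$ is at most the total number $m$ of zeros of $\omega$; substituting both bounds yields $\deg(\phi)\leq\tfrac12\bigl(\deg(\mathrm{div}_0(\omega))+m\bigr)$, as claimed.

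The argument is short, and the only genuinely load-bearing step is the existence of a zero $Q$ of $\eta$: this is exactly where the general-type hypothesis is indispensable, since, for example, an exponential-type $\eta$ would have no zeros at all and the fibre construction would be empty. Everything after that is routine bookkeeping with the local formula (\ref{ram-ord}) together with the elementary fact that a degree equals the ramification sum over any fibre, so I do not anticipate a serious obstacle beyond verifying that the chosen fibre points are counted correctly among the zeros of $\omega$.
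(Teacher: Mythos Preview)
Your proof is correct and follows essentially the same approach as the paper's: pick a zero $Q$ of $\eta$ (whose existence comes from $\eta$ being of general type via Proposition~\ref{gen-type-prop}), bound each $e_{P_i}$ over the fibre $\phi^{-1}(Q)$ using that $e_{P_i}$ is a proper divisor of $\mathrm{ord}_{P_i}(\omega)+1$, and sum. Your write-up is simply more explicit about citing Proposition~\ref{gen-type-prop} and equation~(\ref{ram-ord}) and about why $n\leq m$ and $\sum_i \mathrm{ord}_{P_i}(\omega)\leq \deg(\mathrm{div}_0(\omega))$ in the final inequality.
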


	\section{ Existence of new and general type $1$-forms}
	
	\subsection{Proof of main theorems and related examples} 
	Let $X$ and $Y$ be two curves with genus $g_X$ and $g_Y$ respectively. Let $\phi:X\rightarrow Y$ be a morphism with $d:=\mathrm{deg}(\phi).$ By Riemann-Hurwitz formula, \begin{equation}\label{Rie-Hur-Formula}
		2g_X-2=d(2g_Y-2)+ \sum_{P\in X}(e_P-1).	\end{equation} If $g_X, g_Y\geq 2,$ then $e_P\leq d\leq\frac{2g_X-2}{2g_Y-2}.$  If $g_{Y}=1,$ then $2g_{X}-2=\sum_{P\in X}(e_{P}-1)$ implies $e_p\leq 2g_X-1.$ Therefore, if  $(X, \omega)\overset{\phi}{\longrightarrow}(Y, \eta),$ with $g_Y\geq 1,$ and $P$ is either a zero or pole of $\omega,$ then  \begin{equation}\label{bound-ramification-index}
		e_{P}\leq 2g_X-1.
	\end{equation}
	
	For $n\in \mathbb{N},$ let $\mathfrak{D}(n):=\left\lbrace m\in \mathbb{N} : m \text{ does not have divisor less than equal to n except 1}\right\rbrace.$ Note that  $n_1\leq n_2$ implies $\mathfrak{D}(n_2)\subseteq \mathfrak{D}(n_1).$

	\begin{lemma}\label{main-lemma}
		Let $\omega$ be a meromorphic $1$-form on a curve $X$ of genus $g_X$ with \begin{equation*}
			\mathrm{div}(\omega)= \sum_{i=1}^{r}u_iP_i-\sum_{i=1}^{n}R_i-\sum_{i=1}^{m}w_iS_i, \text{ }\text{$w_i\geq 2.$}
		\end{equation*}\begin{enumerate}[(i)]
			\item Let $r>0, n>0$ and $m\geq 2.$ If  $w_i-1\in \mathfrak{D}(2g_X+r+n-1)$ for $1\leq i\leq m,$ and there is an $i$ such that $ w_i\neq w_j$ for $1\leq j\neq i\leq m,$ then $\omega$ is new and general type.
			\item Let $r>0, n=0$ and $m\geq 2.$ If $w_i-1\in \mathfrak{D}(2g_X+r+m),1\leq i\leq m,$ are prime, there is an $i$ such that $w_i\neq w_j$ for $ 1\leq j\neq i\leq m$ and some $u_i$ is greater than equal to $(\sum_{i=1}^{m}w_i)-m,$ then $\omega$ is new and general type. 
		\end{enumerate}
		\begin{proof} (i) By Proposition \ref{key-proposition}, $(X,\omega)$ is of general type . If $\omega$ is old, then there is a morphism $\phi$ and a pair $(Y,\eta)$ such that $(X, \omega)\overset{\phi}{\longrightarrow}(Y, \eta)$ with $\mathrm{deg}(\phi):=d\geq 2.$ The pair $(Y,\eta)$ is of general type; hence $\eta$ has a zero, say at $Q$ (Proposition \ref{gen-type-prop}). Let $R=R_i$ for some $i.$ Then every element in $\phi^{-1}(Q)$ is a zero of $\omega$ and every element in $\phi^{-1}(\phi(R))$ is a simple pole of $\omega.$
			
			We claim that $e_{S_i}=1$ for all $i.$ If $g_Y\geq 1$ then by equation (\ref{bound-ramification-index}), $e_{S_i}\leq 2g_X-1$$\leq 2g_X+r+n-1.$
			Next, let $g_Y=0.$ By Riemann-Hurwitz formula 
			\begin{equation*}
				\sum_{P\in X}(e_P-1)= 2g_X+2d-2. 
			\end{equation*}Then for any pole $S_i$ of $\omega,$
			\begin{equation*}
				(e_{S_i}-1)+ \sum_{P\in \phi^{-1}(Q)}(e_{P}-1)+\sum_{T\in \phi^{-1}(\phi(R))}(e_{T}-1) \leq 2g_X+2d-2.
			\end{equation*} This implies,
			$e_{S_i}\leq (2g_X+2d-2)-(d-\#\phi^{-1}(Q))-(d-\#\phi^{-1}(\phi(R_1)))+1\leq 2g_X+r+n-1.$ Also by Lemma \ref{obv} (ii), $e_{S_i}$ divides $\left| \mathrm{ord}_{S_i}(\omega)+1\right| = w_i-1.$ Since $w_i-1\in \mathfrak{D}(2g_X+r+n-1), w_i-1$ does not have divisor less then equal to $2g_X+r+n-1.$ Hence   $e_{S_i}=1$ for all $i.$ This proves the claim.

			Now $d\geq 2, e_{S_i}=1$ for all $i$ and $\phi^{-1}(\phi(S_i))\subseteq \left\lbrace S_1,\dots,S_m\right\rbrace,$ implies $\phi^{-1}(\phi(S_i))$ must contains $S_j$ for some $j,$ $1\leq j\neq i\leq m.$ Then by Lemma \ref{obv} (iv) $w_i=w_j,$ a contradiction. Therefore, no such morphism $\phi$ and a pair  $(Y,\eta)$ exists, and $\omega$ is new.

			(ii) We shall show that there is no morphism $\phi$ and a $1$-form $\eta$ on a curve $Y$ such that $(X, \omega)\overset{\phi}{\longrightarrow}(Y, \eta)$ with $\mathrm{deg}(\phi):=d\geq 2.$ Let us assume the contrary. If $g_Y\geq 1,$ then using the same argument as in the proof of (i), we get $e_{S_i}\leq 2g_X-1$ for all $i,$ and the assumptions on $w_i$ lead to a contradiction. 
			
			Let $g_Y=0.$ We divide the proof into two parts. First, we show that there exists no pair $(\mathbb{P}^1,\eta)$ such that $\eta$ has at least one zero and at least two poles of order $\geq 2.$  As $\eta$ has two poles of order $\geq 2,$ by Lemma \ref{obv} (ii), for each $i$ there is a $j, j\neq i, 1\leq i,j\leq m$ such that $S_j\notin \phi^{-1}(\phi(S_i)).$ Otherwise, for any $i,$ $\phi^{-1}(\phi(S_i))=\left\lbrace S_1,\dots, S_m\right\rbrace,$ which contradicts that $\eta$ has two poles of order $\geq2.$ Let $Q$ be a zero of $\eta.$ By Riemann-Hurwitz formula, for any $S_i$\begin{equation*}
				(e_{S_i}-1)+\sum_{P\in \phi^{-1}(Q)}(e_{P_i}-1)+\sum_{S_i\in\phi^{-1}(\phi(S_j))}\leq 2g_X+2d-2.
			\end{equation*}Hence $e_{S_i}\leq 2g_X+r+m$ for all $i.$ Then again, using the same argument as in (i), because of the assumptions on $w_k,$ we have $w_i=w_j$ for some $j\neq i,$ a  contradiction.
			
			To complete the proof, we need to show that there is no morphism $\phi$ and $1$-form $\eta$ with $(X, \omega)\overset{\phi}{\longrightarrow}(\mathbb{P}^1, \eta)$  with $\mathrm{deg}(\phi):=d\geq 2$ where either $\eta$ has no zero (hence $\eta$ has only a pole of order $2$) or $\eta$ has one zero and only one pole of order $> 2.$  In the first case, by Lemma \ref{obv} (iii), $e_{P_i}= u_i+1.$  Also all poles of $\omega$ will map to the only pole, say $Q_1,$ of order $2$ of $\eta.$ Then $e_{S_i}= w_i-1$ for $1\leq i\leq m$ and $d= \sum_{P\in \phi^{-1}(Q_1)}e_{P}= \sum_{i=1}^{m}w_i-m.$ But by assumption $e_{P_i}=u_i+1 > d$, a contradiction. This in fact proves that $\omega$ is not exact type. Also, $\omega$ cannot be exponential type, or Weierstrass type, follows from Proposition \ref{key-proposition}.

			Let $Q_2$ be the only pole of order $\tilde{w}> 2$ of $\eta$ in the second case. Then all the poles of $\omega$ will map to $Q_2.$ As $w_i-1$ are prime, $e_{S_i}$ is either $1$ or $w_i-1.$ But $\tilde{w}>2$ implies all $e_{S_k}=1.$ Again by the same argument,  $w_i=w_j$ for some $j\neq i,$ a contradiction. Therefore, $(X, \omega)$ does not have a proper pullback, and $(X,\omega)$ is new and of general type.     
		\end{proof}
	\end{lemma}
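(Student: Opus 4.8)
The plan is to prove the two assertions ``general type'' and ``new'' in that order, the second by assuming a proper pullback $(X,\omega)\overset{\phi}{\longrightarrow}(Y,\eta)$ with $d:=\mathrm{deg}(\phi)\geq 2$ and forcing a contradiction through the ramification indices at the higher-order poles $S_i$. For the general type claim in part (i), the divisor of $\omega$ carries a simple pole (as $n>0$) together with a pole of order $\geq 2$ (as $m\geq 2$), so Proposition \ref{key-proposition}(v) applies at once. In part (ii) there are no simple poles, and I would instead eliminate the three special types directly: $\omega$ is not exponential because it has a pole of order $\geq 2$ and not Weierstrass because it is not holomorphic (Proposition \ref{key-proposition}), while the exclusion of the exact type is obtained as a by-product of the pullback analysis below, since an exact pair is precisely a pullback of $(\mathbb{P}^1,dx)$.

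So assume a proper pullback exists. By Proposition \ref{gen-type-prop}, $\eta$ is of general type and therefore has a zero $Q$; moreover poles of $\omega$ map to poles of $\eta$, preimages of simple poles are simple, and preimages of zeros are zeros (Lemma \ref{obv}). The heart of the argument is a uniform bound on $e_{S_i}$. If $g_Y\geq 1$, then (\ref{bound-ramification-index}) gives $e_{S_i}\leq 2g_X-1$ immediately. If $g_Y=0$, I would write Riemann--Hurwitz as $\sum_{P}(e_P-1)=2g_X+2d-2$ and estimate the contributions of three pairwise disjoint groups: the single point $S_i$, the fibre over $Q$ (whose points are zeros of $\omega$, so at most $r$ of them, contributing $d-\#\phi^{-1}(Q)\geq d-r$), and the fibre over the image of a simple pole in part (i), resp. a second higher pole in part (ii) (whose points are higher poles of $\omega$, so at most $n$ resp. $m$ of them). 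This yields $e_{S_i}\leq 2g_X+r+n-1$ in part (i) and $e_{S_i}\leq 2g_X+r+m$ in part (ii).

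Now I would invoke the arithmetic hypotheses. Since $e_{S_i}$ divides $|\mathrm{ord}_{S_i}(\omega)+1|=w_i-1$ (Lemma \ref{obv}(ii)) and $w_i-1$ lies in $\mathfrak{D}(2g_X+r+n-1)$ (resp. is a prime in $\mathfrak{D}(2g_X+r+m)$), the bound forces $e_{S_i}=1$ for every $i$. Taking the distinguished index with $w_i\neq w_j$ for all $j\neq i$, the fibre $\phi^{-1}(\phi(S_i))$ is then unramified of degree $d\geq 2$, so it contains some other $S_j$; Lemma \ref{obv}(iv) gives $w_i-1=w_j-1$, contradicting uniqueness. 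This settles part (i). For part (ii), two degenerate shapes of $\eta$ on $\mathbb{P}^1$ fall outside the fibre count, namely when $\eta$ has a single pole. If that pole has order $2$ (so $\eta$ has no zero and is exact), every zero $P_i$ of $\omega$ maps to a non-zero of $\eta$, whence $e_{P_i}=u_i+1$ by Lemma \ref{obv}(iii), while all poles collapse to the one pole and $d=\sum_i(w_i-1)=(\sum_i w_i)-m$; the hypothesis $u_i\geq(\sum_i w_i)-m$ then yields $e_{P_i}>d$, which is absurd. If instead the single pole has order $\tilde{w}>2$, the order relation (\ref{ram-ord}) reads $w_i-1=e_{S_i}(\tilde{w}-1)$, and primality of $w_i-1$ with $\tilde{w}-1\geq 2$ forces $e_{S_i}=1$, returning us to the fibre contradiction.

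\textbf{Main obstacle.} I expect the delicate point to be the $g_Y=0$ analysis: choosing the correct auxiliary fibre (a second higher-order pole in part (ii)), verifying the three groups are genuinely disjoint so their ramification defects add, and then isolating the low-pole-count configurations of $\eta$ that the generic Riemann--Hurwitz count does not reach. It is exactly in these degenerate configurations that the two extra hypotheses of part (ii) --- primality of $w_i-1$ and the size condition $u_i\geq(\sum_i w_i)-m$ --- become indispensable, and arranging the resulting ramification thresholds to match the definition of $\mathfrak{D}$ is where the bookkeeping must be done with care.
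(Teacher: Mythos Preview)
Your proposal is correct and follows the paper's argument essentially step for step: the same Riemann--Hurwitz fibre count bounding $e_{S_i}$ (using the fibre over a zero of $\eta$ together with the fibre over a simple pole in (i), resp.\ a second higher-order pole in (ii)), the same fibre contradiction via Lemma~\ref{obv}(iv) at the distinguished $S_i$, and in part (ii) the same isolation of the two single-pole configurations of $\eta$ on $\mathbb{P}^1$ where the extra hypotheses (primality of $w_i-1$ and the size of some $u_i$) do their work. One small caveat: your appeal to Proposition~\ref{gen-type-prop} to produce a zero of $\eta$ in part (ii) is circular, since $\omega$ is not yet known to be of general type there; but in the relevant sub-case ($\eta$ has at least two poles of order $\geq 2$ on $\mathbb{P}^1$) the zero exists anyway from $\deg\mathrm{div}(\eta)=-2$, so the argument goes through with that substitution and the paper organises part (ii) in exactly this way.
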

	
	\begin{remark} 
		In Lemma \ref{main-lemma}, (i) can be replace by (i)$^*:$ Let $r>0, n>0$ and $m\geq 2.$ If $u_i+1\in \mathfrak{D}(2g_X+r+n-1)$ and  $u_i\neq u_j$ for $1\leq i\neq j\neq i\leq m,$  then $\omega$ is new and general type.
	\end{remark}

	\begin{proof}[Proof of Theorem \ref{main-higher-poles}] The proof follow from Lemma \ref{main-lemma}, once we prove that there exists  $1$-forms on $\mathbb{P}^1,$ elliptic and hyperelliptic curves satisfying conditions of Lemma \ref{main-lemma}. \begin{enumerate}[(i)]
			\item Let $r>0, n>0$ and $m\geq2.$ For $\mathbb{P}^1,$ the existence is obvious. For elliptic and hyperelliptic curves, if $n$ is even, one can construct $1$-forms satisfying Lemma \ref{main-lemma} (i) using  (\ref{1-form-elliptic}) and (\ref{1-form-H}), as explained in Section \ref{exist-1-form}.
			\item Let $r>0, n=0, m\geq 2.$ In this case, $1$-forms satisfying conditions of Lemma \ref{main-lemma}(ii) exists if $m-r\geq 2-2g.$ The last condition ensures that after choosing one of the $u_i$ big enough, one can still choose other zeros of order at least one.             
		\end{enumerate}     
	\end{proof}

	Next, we use Lemma \ref{main-lemma} to produce some easy examples of new and general type $1$-forms on $\mathbb{P}^1,$ elliptic and hyperelliptic curves.

	\begin{examples} Let $\mathbb{P}^1$ be projective line over $\mathbb{C}.$
		\begin{enumerate}[(i)] 
			\item Let $r=5, n=2$ and $m=4.$ Then $2g_X+r+n-1=6.$ Choose $w_1=14, w_2=w_3=18$ and $w_4=20.$ They satisfy the conditions of Lemma \ref{main-lemma}. To get a new and general type $1$-form, we need to choose $u_1,\dots,u_5$ such that $u_1+\dots+u_5-2-14-18-18-20=-2,$ i.e., $u_1+\dots+u_5=70.$ There are many choices and for each choice we get a $1$-form. For example, Let  $u_1=10, u_2=24, u_3=11, u_4=21, u_5=4,$ then the $1$-form is
			$$\frac{(x-1)^{10}(x-2)^{24}(x-3)^{11}(x-4)^{21}(x-5)^4dx}{(x-6)(x-7)(x-8)^{14}(x-12)^{18}(x-23)^{18}(x-25)^{20}}.$$
			\item Let $r=3, n=0,$ and $m=7.$ Then $2g_X+r+m=10$ and $m-r=4\geq2.$ Choose $w_1=12, w_2= w_3=w_4=14, w_5=w_6= 18, w_7=20$ and $u_1=103.$ The $1$-form is \begin{equation*}
				\omega=\frac{x^{103}(x-2)^2(x-3)^3dx}{(x-1)^{12}(x-5)^{14}(x-6)^{14}(x-7)^{14}(x-8)^{18}(x-9)^{18}(x-10)^{20}}
			\end{equation*} is new and general type by Lemma \ref{main-lemma}.
		\end{enumerate}
		
	\end{examples}

	\begin{examples} Let $E$ be the elliptic curve be given by the equation $y^2=x^3-4x$ over $\mathbb{C}.$
		\begin{enumerate}[(i)] 
			\item Let $r=3, n=2, m=2.$ Then  $2g_X+r+n-1=6.$ Choose $w_1=12, w_2=14$ and consider the $1$-form on $E$
			$$\omega=\frac{x(x-3)^{12}}{(x+1)(x-2)^{7}(x+2)^{8}}ydx.$$ Then $\mathrm{div}(\omega)=12 \left[ (3,\sqrt{15})\right] + 12\left[ (3,-\sqrt{15})\right] +4 \left[  (0,0)\right] -\left[ (-1,\sqrt{3})\right] -\left[ (-1,-\sqrt{3})\right] -12\left[ (2,0)\right] -14\left[  (-2,0)\right] .$ By Lemma \ref{main-lemma}, $\omega$ is new and general type.
			\item Let $r=3, n=0$ and $m=4.$ Then $2g_X+r+m=9$ and $m-r=1\geq 0.$ Let $w_1=12, w_2=12, w_3=18, w_4=24$ and $u_1=62.$ Consider the $1$-form on $E$  $$\omega=\frac{(x-3)^{2}x^{30}}{(x+2)^{10}(x-2)^{13}(x-5)^{12}}ydx.$$ Then $\mathrm{div}(\omega)= 2\left[ (3,\sqrt{15})\right]+ 2\left[ (3,-\sqrt{15})\right]+62\left[ (0,0)\right] -18\left[ (2,0)\right]- 24\left[ (-2,0)\right]-12\left[ (5, \sqrt{105})\right] -12 \left[ (5, \sqrt{105})\right] $ and by Lemma \ref{main-lemma}, $\omega$ is new and general type.
		\end{enumerate}
	\end{examples} 
	
	\begin{examples} Let $H$ be the hyperelliptic curve of genus $2$ given by $y^2=x(x+1)(x+2)(x-2)(x-3)$ over $\mathbb{C}.$
		\begin{enumerate}[(i)]
			\item Let $r=5, n=2, m=5.$ Then $2g_X+r+n-1=10.$ The following $1$-form $\omega$ on $H$ defined by
			$$\omega=\frac{(x-2)^3(x+4)^{36}(x+5)^{36}}{x^{13}(x-1)(x-4)^{30}(x-5)^{32}}ydx.$$ %with $\mathrm{div}(\omega)= 8\left[ (2,0)\right] +36\left[ (-4, 12\sqrt{7}\iota)\right]+36\left[ (-4, -12\sqrt{7}\iota)\right] + 36\left[ (-5, 4\sqrt{210}\iota)\right]+36\left[ (-5, -4\sqrt{210}\iota)\right]   -\left[ (1, 2\sqrt{3})\right] -\left[ (1, -2\sqrt{3})\right]-24\left[ (0,0)\right]-30\left[ (4, 4\sqrt{15})\right]-30\left[ (4, -4\sqrt{15})\right]-32\left[ (5, 6\sqrt{35})\right]-32\left[ (5, -6\sqrt{35})\right],$ is
			The pair $(H,\omega)$ is new and of general type.  
			\item Let $r=3, n=0, m=2.$ Then $2g_X+r+m=9$ and $m-r=-1\geq -2.$ Consider the $1$-form on $H$ defined by
			$$\omega=\frac{x^{14}(x-1)^2}{(x+2)^{8}(x-2)^{10}}ydx$$ with $\mathrm{div}(\omega)=  30\left[ (0,0)\right] +2\left[ (1,2\sqrt{3})\right]+2\left[ (1,-2\sqrt{3})\right] -14\left[ (-2,0)\right] -18\left[ (2,0)\right].$ Then $\omega$ is new and general type. 
		\end{enumerate}  
	\end{examples}

	\begin{proof}[Proof of Theorem \ref{main-simple-poles}] The $1$-form $\omega$ is general type by \ref{key-proposition}. We show that there exists no pair $(Y,\eta)$ and a morphism $\phi$ such that $(X, \omega)\overset{\phi}{\longrightarrow}(Y, \eta)$ with $d:= \mathrm{deg}(\phi)\geq 2.$  Let $\tilde{g}$ be the genus of $Y.$ Then $\tilde{g} \leq g.$ and by the Riemann-Hurwitz formula \begin{equation*}
			2g-2= d(2\tilde{g}-2)+ \sum_{P\in X}(e_P-1).
		\end{equation*} Let $\left\lbrace P_{i}\right\rbrace _{i\in I}$ be the poles of $\omega$ with residues $\left\lbrace a_i\right\rbrace _{i\in I}$ and $\left\lbrace Q_{j}\right\rbrace _{j\in J}$ be the poles of $\eta$ with residues $\left\lbrace b_j\right\rbrace _{j\in J}$. Then by Lemma \ref{obv} (v), if $P_i\in \phi^{-1}(Q_j)$ for some $i\in I,j\in J$ then $a_i=e_{P_{i}}b_j$ where $e_{P_{i}}$ is the ramification index at $P_{i}.$
		
		We claim that there exists $i_1, i_2 \in I, i_1\neq i_2$ such that $P_{i_1}, P_{i_2} $ in the same fiber $\phi^{-1}(Q_j)$ for some $j\in J.$ Let say the claim is true. Then \begin{equation*}
			\frac{a_{i_1}}{a_{i_2}}=\frac{e_{P_{i_1}}b_j}{e_{P_{i_2}}b_j}=\frac{e_{P_{i_1}}}{e_{P_{i_2}}},
		\end{equation*}a contradiction on the assumptions on $\omega.$ Hence no such $\phi$ and $\eta$ exists. 
		
		Next, we shall prove the claim by contradiction. If the claim is not true, then for each pole $Q_j$ of $\eta,$ the fiber $\phi^{-1}(Q_j)$ is a pole $P_i$ of $\omega$ for some unique $j\in J$ and $i\in I.$ Then $e_{P_{i}}=d.$ Also  $\eta$ has at least $2g+2$ poles with nonzero residue. Otherwise, two or more poles of $\omega$ with nonzero residues will map to a single pole of $\eta.$ Then, the residues at the points in the fiber of a pole of $\eta$ will be $\mathbb{Q}-$linearly dependent, a contradiction. By Riemann-Hurwitz formula, the maximum possible number of points $P_i$ such that $e_{P_i}=d$ is $2g+2$ ($\tilde{g}=0$ with $d=2$). It follows that $\eta$ has at most $2g+2$ poles with nonzero residue; hence so does $\omega.$ But then the sum of these nonzero residues is zero, implying that they are dependent.
	\end{proof}
	\begin{remark}
		If $\omega_1,\dots, \omega_n$ are $1$-form on a curve $X$ satisfying the assumptions on Theorem \ref{main-simple-poles}, hence they are mew and general type $1$-form on $X.$ Then any $\mathbb{Q}-$linear combination $r_1\omega_1+\cdots+r_n\omega_n,$ $r_i\in\mathbb{Q},$ is also a new and general type $1$-form on $X.$ 
	\end{remark}
	\begin{example}[An example of new $1$-form on $\mathbb{P}^1$ by using Theorem \ref{main-simple-poles}] Consider the $1$-form on $\mathbb{P}^1$
		\begin{equation*}
			\omega= \frac{dx}{x-c_1}+\sqrt{2}\frac{dx}{x-c_2}+\sqrt{3}\frac{dx}{x-c_3}+xdx 
		\end{equation*} where $c_1, c_2, c_3 \in C$ are distinct points. Then $\omega$ is a new and general type $1$-form. In general  
		\begin{equation*}
			\omega= \sum_{i=1}^{n}c_i \frac{du_i}{u_i}+dv, \text{ }u_i, v\in C(x)
		\end{equation*} 
		is new and general type by Theorem \ref{main-simple-poles} if $u_i$  is of the form $a_i(x-b_i)^n$ for some $n\in \mathbb{Z}\setminus\{0\}$ and $c_i, c_j$ is not a rational multiple of each other for each $i, j$ with $ i\neq j$.  
	\end{example}

	\subsection{\boldmath {Existence of new and general type $1$-from in $\Omega(\mathrm{D})$}}
	Let $\mathrm{D}$ be an effective divisor on a curve $X$ and $\Omega(\mathrm{D}):=\left\lbrace \omega\in \Omega_X^1: \mathrm{div}(\omega)\geq-\mathrm{D}\right\rbrace .$ In this subsection, we investigate the existence of a new and general type $1$-form in $\Omega(\mathrm{D}).$  
	\begin{proof}[Proof of Corollary \ref{existence-in-omega-D}] The first part follows from Theorem \ref{main-simple-poles}. For the second part, by \cite[Problems VII.1]{MR95}, a $1$-form $\omega$ on $X$ exists with only a pole of order $2$ at any $P.$ We shall show that $(X,\omega)$ is new. If there is a pair $(Y,\eta)$ such that $(X, \omega)\overset{\phi}{\longrightarrow}(Y, \eta)$ with $\mathrm{deg}(\phi):=d\geq 2,$ then $\phi^{-1}(\phi(P))=\{P\}$ and $e_{P}=d\geq 2.$ But $e_{P}$ also divide $|\mathrm{ord}_P(\omega)+1|=1,$ a contradiction.
	\end{proof}           
	
	\begin{remark}
		In Corollary \ref{existence-in-omega-D} if $n+m\leq 2g+2,$ there may not exist a new and general type $1$-forms in $\Omega(\mathrm{D})$ (noted in Section \ref{Intro}). However, in some cases, the existence can be shown using Theorem \ref{main-higher-poles}. For example, let $\mathrm{D}=R_1+\mathfrak{I}(R_1)+2e_1+ 4e_2$ be an effective divisor on $E.$ Then $\Omega(D)$ contains a new and general type $1$-form. 
	\end{remark}
	
	For $\mathbb{P}^1,$ we have a complete answer on the existence of new and general type $1$-forms in $\Omega(\mathrm{D}).$ 
	\begin{proposition}
		Let $\mathrm{D}$ be an effective divisor on $\mathbb{P}^1$. There exists a new and general type $1$-form in $\Omega(\mathrm{D})$ for all $\mathrm{D}$ except $\mathrm{D}=R_1+R_2$ and $\mathrm{D}=wS, w\geq2.$  In the last case, a new $1$- form does exist.
	\end{proposition}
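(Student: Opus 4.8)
The plan is to run a case analysis on the number $s$ of points in the support of $\mathrm{D}=\sum_{i=1}^{s}a_iP_i$ (with $a_i\ge 1$). Since $\mathrm{div}(\omega)$ has degree $-2$ for any nonzero $1$-form on $\mathbb{P}^1$, its polar divisor has degree at least $2$; thus $\deg\mathrm{D}\le 1$ forces $\Omega(\mathrm{D})=0$, and I assume $\deg\mathrm{D}\ge 2$ (for smaller $\mathrm{D}$ there are no nonzero forms at all). The two exceptional families are precisely the degree-$2$ divisor $R_1+R_2$ ($s=2$, $a_1=a_2=1$) and the one-point divisors $wS$ ($s=1$); every other $\mathrm{D}$ has either $s\ge 3$ or $s=2$ with some $a_i\ge 2$, and these are the two cases where I will produce a new and general type form.

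For $s\ge 3$ I would simply invoke Theorem~\ref{main-simple-poles}. Place three simple poles at three support points $P_1,P_2,P_3$ with residues $r_1,r_2,r_3$ summing to zero and pairwise $\mathbb{Q}$-linearly independent (possible since $C$ is algebraically closed of characteristic zero, e.g.\ $r_1=1,\ r_2=\sqrt{2},\ r_3=-1-\sqrt{2}$), and no pole elsewhere. The resulting $\omega$ lies in $\Omega(\mathrm{D})$, and because $g=0$ gives $2g+2=2$, it meets the hypotheses of Theorem~\ref{main-simple-poles} (at least two poles with nonzero residue, no two of the nonzero residues $\mathbb{Q}$-dependent); hence $\omega$ is new and of general type.

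The substantive case is $s=2$ with, say, $a_1\ge 2$. In coordinates $P_1=0,\ P_2=\infty$ I would take the explicit form $\omega=\dfrac{1+\alpha x}{x^{2}}\,dx$ with $\alpha\neq 0$, which has a pole of order exactly $2$ at $P_1$, a simple pole at $P_2$, and a single zero at $x=-1/\alpha$; thus $\omega\in\Omega(\mathrm{D})$, and it is of general type by Proposition~\ref{key-proposition}(v). For newness, suppose $(\mathbb{P}^1,\omega)\overset{\phi}{\longrightarrow}(Y,\eta)$ with $d=\deg\phi\ge 2$. Applying \eqref{ram-ord} at $P_1$ gives $-1=\mathrm{ord}_{P_1}(\omega)+1=e_{P_1}\bigl(\mathrm{ord}_{Q_1}(\eta)+1\bigr)$, where $Q_1=\phi(P_1)$; since both factors are integers this forces $e_{P_1}=1$ and $\mathrm{ord}_{Q_1}(\eta)=-2$. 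Every $P\in\phi^{-1}(Q_1)$ then satisfies $\mathrm{ord}_{P}(\omega)+1=-e_P\le -1$, so $P$ is a pole of $\omega$ of order $\ge 2$; but $P_1$ is the \emph{unique} such pole of $\omega$, so $\phi^{-1}(Q_1)=\{P_1\}$ and $d=e_{P_1}=1$, a contradiction. Hence $\omega$ is new.

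Finally I would check the two exceptions. If $\mathrm{D}=wS$, any $\omega\in\Omega(\mathrm{D})$ has its only pole at $S$, so its single residue vanishes (the residues of a $1$-form on $\mathbb{P}^1$ sum to zero) and $\omega$ is exact; exact forms are never of general type, yet $\dfrac{dx}{(x-S)^{2}}\in\Omega(\mathrm{D})$ is exact and new by Corollary~\ref{rat-aut}(i). If $\mathrm{D}=R_1+R_2$, a nonzero $\omega\in\Omega(\mathrm{D})$ must have polar divisor of degree exactly $2$, i.e.\ simple poles at $R_1,R_2$ and no zeros, so it is not of general type by Proposition~\ref{gen-type-prop}(ii). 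I expect the newness step in the $s=2$ case to be the main obstacle: general type comes for free, but excluding all proper pullbacks rests on the observation that a pole of order exactly $2$ can only lie over an order-$2$ pole of $\eta$ with ramification index $1$, which collapses the fiber to a single point and pins $d$ down to $1$.
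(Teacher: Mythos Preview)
Your proof is correct and follows essentially the same strategy as the paper: split according to the size of $\mathrm{Supp}(\mathrm{D})$, invoke Theorem~\ref{main-simple-poles} (via three pairwise $\mathbb{Q}$-independent residues) when the support has at least three points, and for two-point support with some multiplicity $\ge 2$ exhibit a form whose unique order-$2$ pole forces any putative pullback to have degree $1$. Your two-point case is in fact slightly more economical than the paper's, which treats the sub-cases $n=m=1$ and $n=0,\ m=2$ separately (the latter via a form with \emph{two} double poles and a fiber argument at its unique zero), whereas your single form with one double pole and one simple pole handles both at once.
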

	\begin{proof}
		Let $\mathrm{D}=R_1+\cdots+R_n+w_1S_1+\cdots+w_mS_m, w_i\geq 2$ be an effective divisor on $\mathbb{P}^1.$ By Corollary \ref{existence-in-omega-D}, it is enough to assume $n+m\leq 2.$ We may assume that $\infty\notin \mathrm{Supp}(\mathrm{D}).$  We will divide the proof in the following cases:  
		
		Case (i): Let $n>0$ and $m=0.$ A $1$-form on $\mathbb{P}^1$ with only simple poles must have at least two poles. If $\omega\in \Omega^1_{\mathbb{P}^1}$ has only two simple poles, then $\omega=dx/x$ (up to a change of variable), which is exponential type.
		
		Case (ii): Let $n>0$ and $m> 0.$ There always exists a new and general type $1$-form, as the following example suggests. Let \begin{equation*}
			\omega= \frac{dx}{(x-S_1)^2(x-R_1)}
		\end{equation*}
		with $\mathrm{div}(\omega)= n\infty-R_1-2S_1.$
		It is a general type by Proposition \ref{key-proposition}. To show it is new, let $(\mathbb{P}^1, \omega)\overset{\phi}{\longrightarrow}(\mathbb{P}^1, \eta)$  and $\mathrm{deg}(\phi):=d\geq 2.$ Since $\omega$ has only one pole $S_1$ of order $\geq 2, \phi^{-1}(\phi(S_1))=\left\lbrace S_1\right\rbrace $ and $e_{S_1}=d\geq 2.$ But $e_{S_1}$ also divides $|\mathrm{ord}_{S_1}(\omega)+1|=1$ which implies $e_{S_1}=1,$ a contradiction.

		Case (iii): Let $n=0$ and $m>0.$ Let $m=2.$ If one of the $w_i$ is greater than $2,$ the existence follows from Theorem \ref{main-higher-poles}. Let $m=2$ and $w_1=w_2=2.$ New and general type $1$-form exists. Consider the $1$-form  \begin{equation*}
			\omega=\frac{dx}{(x-S_2)^2(x-S_3)^2}
		\end{equation*} with $\mathrm{div}(\omega)=2 \infty -2S_1 -2S_2.$ Then $\omega \in \Omega(D)$ and it is a general type by Proposition \ref{key-proposition}. Now if $(\mathbb{P}^1, \omega)\overset{\phi}{\longrightarrow}(\mathbb{P}^1, \eta)$  with $\mathrm{deg}(\phi):=d\geq 2,$ then $\eta$ is also general type. Hence $\eta$ has a zero, say at $Q$ and $\phi^{-1}(Q)=\left\lbrace \infty\right\rbrace,$ which implies   $e_{\infty}=d\geq 2.$ By equation (\ref{ram-ord}), $e_{\infty}(\mathrm{ord}_{Q}(\eta)+1)=3,$ which is not possible.

		Let $m=1.$ There are no new and general type $1$-forms in $\Omega(\mathrm{D}).$ If $\omega\in
		\Omega(\mathrm{D}),$ $\omega$ must have a pole at $S_1$ of order $n$ such that $n\leq w_1.$ Let \begin{equation*}
			\omega=\frac{(x-P_1)^{n_1}\cdots(x-P_r)^{n_r}dx}{(x-S_1)^n}:=g(x)dx.
		\end{equation*} Since $\mathrm{deg}(\omega)=-2,$ $n_1+\cdots+n_r=n-2.$
		Then the partial fraction of $g(x)$ will be 
		\begin{equation*}
			\sum_{i=2}^{n}\frac{c_i}{(x-S_1)^i}.
		\end{equation*} By Corollary \ref{rat-aut}, $\omega$ is exact type. Note that $\omega_1=\frac{c_1dx}{(x-S_1)^2}\in \Omega(\mathrm{D})$ is new.   
	\end{proof}

	Next, we calculate a lower bound on the dimension of the space of new and general type $1$-forms in $\Omega(\mathrm{D})$. We construct a class of such $1$-forms whose nonzero $C$-linear combinations are also new and of general type.  
	\begin{lemma}\label{lemma-dim-count} Let $\mathrm{D}=R_1+\cdots+R_n+w_1S_1+\cdots+w_mS_m$ with $w_i\geq 2,$ be an effective divisor on $X.$ Let $\omega$ be a $1$-form having at least one simple pole, at least two  poles of order $\geq 2$ with nonzero residues and polar divisor $\mathrm{D}.$ If $w_i-1\in \mathfrak{D}(2g_X+n+m+1), 1\leq i\leq m,$ there is an  $i$ such that $w_i\neq w_j$ for $1\leq j\neq i\leq m$ and two nonzero residues of $\omega$ at the poles of order $\geq 2$ are linearly independent over $\mathbb{Q},$ then $\omega$ is new and general type.
		\begin{proof} By Proposition \ref{key-proposition}, the $1$-form $\omega$ is general type . We will show that $\omega$ is new. If not, let $(X, \omega)\overset{\phi}{\longrightarrow}(Y, \eta)$ with $\mathrm{deg}(\phi):=d\geq 2.$ If $g_Y\geq 1,$ using the same argument as in Lemma \ref{main-lemma} (ii), one can show a contradiction. Let $g_Y=0.$ First, let $\eta$ has at least two poles of order $\geq 2.$  Then by Lemma \ref{obv} (ii) for each $k$ there exists an $l, l\neq k, 1\leq l\leq m$ such that $S_k\notin \phi^{-1}(\phi(S_l)).$ By Riemann-Hurwitz formula, for any $S_k$\begin{equation*}
				(e_{S_k}-1)+\sum_{ R_{i}\in \phi^{-1}(\phi(R_1))}(e_{R_i}-1)+\sum_{S_i\in\phi^{-1}(\phi(S_l))}(e_p-1)\leq \sum_{P\in \mathbb{P}^1}(e_P-1)=2d-2,
			\end{equation*}implies $e_{S_k}\leq n+m+1.$ Then $e_{S_k}=1$ and for each $i$ there exists $j, 1\leq i\neq j\leq m$ such that $w_i=w_j,$ a contradiction.  
			
			Let $\eta$ has only one pole at $Q$ of order $\geq 2.$ Then inverse image of $Q$ contains all the poles of order $\geq 2$ of $\omega,$ which will contradict the assumption that $\omega$ has two $\mathbb{Q}-$linearly independent residues at the poles of order $\geq 2.$ 
		\end{proof} 
	\end{lemma}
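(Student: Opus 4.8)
The plan is to establish the two conclusions separately: that $(X,\omega)$ is of general type, and that it is new, the latter by assuming a nontrivial pullback and forcing a contradiction. The overall structure will parallel the proof of Lemma \ref{main-lemma}. General type is immediate: since $\omega$ has a pole of order $\geq 2$ with nonzero residue, Proposition \ref{key-proposition}(iv) gives that $(X,\omega)$ is of general type. So the real work is newness.

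For newness, suppose $(X,\omega)\overset{\phi}{\longrightarrow}(Y,\eta)$ with $d:=\mathrm{deg}(\phi)\geq 2$. By Proposition \ref{gen-type-prop}(i), $\eta$ is of general type and hence has a zero $Q$; moreover, by Lemma \ref{obv}(ii) the image of each higher-order pole $S_k$ is a higher-order pole of $\eta$, so $\eta$ has at least one pole of order $\geq 2$. The core of the argument is to bound each ramification index $e_{S_k}$ by $2g_X+n+m+1$, which forces $e_{S_k}=1$ since $e_{S_k}$ divides $|\mathrm{ord}_{S_k}(\omega)+1|=w_k-1$ (Lemma \ref{obv}(ii)) and $w_k-1\in\mathfrak{D}(2g_X+n+m+1)$ has no divisor $\leq 2g_X+n+m+1$ except $1$. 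Once $e_{S_k}=1$ for every $k$, I would apply the fiber relation Lemma \ref{obv}(iv) to the distinguished pole $S_i$ (the one with $w_i\neq w_j$ for all $j\neq i$): since $e_{S_i}=1$ and $d\geq 2$, the fiber $\phi^{-1}(\phi(S_i))$ must contain a second point, which is necessarily another higher-order pole $S_j$, and Lemma \ref{obv}(iv) then yields $w_i-1=w_j-1$, contradicting the choice of $i$.

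The bound on $e_{S_k}$ splits into two regimes. When $g_Y\geq 1$, the uniform estimate (\ref{bound-ramification-index}) gives $e_{S_k}\leq 2g_X-1\leq 2g_X+n+m+1$ immediately, and the fiber argument above applies regardless of how many higher poles $\eta$ has. When $g_Y=0$ and $\eta$ has at least two poles of order $\geq 2$, the $S_k$ are distributed over at least two fibers, so one can choose another higher-order pole $S_l$ with $S_k\notin\phi^{-1}(\phi(S_l))$; using in addition a simple-pole fiber $\phi^{-1}(\phi(R_1))$ (which consists solely of simple poles of $\omega$ by Lemma \ref{obv}(i)), the three sets $\{S_k\}$, $\phi^{-1}(\phi(R_1))$, $\phi^{-1}(\phi(S_l))$ are pairwise disjoint, and feeding their ramification contributions into the Riemann--Hurwitz identity $\sum_{P\in X}(e_P-1)=2g_X+2d-2$ while bounding the two fiber cardinalities by $n$ and $m-1$ yields $e_{S_k}\leq 2g_X+n+m+1$.

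The one case not covered by a ramification bound is $g_Y=0$ with $\eta$ having a single pole $Q$ of order $\geq 2$: here no second higher-pole fiber exists, so $e_{S_k}$ cannot be controlled this way. Instead I would argue with residues. All higher-order poles of $\omega$ then lie in $\phi^{-1}(Q)$, so by Lemma \ref{obv}(v) the residue at each such $S_k$ equals $e_{S_k}$ times the residue $b$ of $\eta$ at $Q$; any two of these residues are thus $\mathbb{Q}$-proportional (with $b\neq 0$ since the residues are nonzero), contradicting the hypothesis that two residues at higher-order poles are $\mathbb{Q}$-linearly independent. I expect the main obstacle to be precisely the Riemann--Hurwitz bookkeeping in the $g_Y=0$ two-pole subcase: one must choose genuinely disjoint fibers and bound their cardinalities by $n$ and $m-1$ carefully enough that the resulting estimate stays within $2g_X+n+m+1$, so that the $\mathfrak{D}$-hypothesis can be invoked.
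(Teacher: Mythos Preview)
Your proposal is correct and follows essentially the same route as the paper: general type via Proposition \ref{key-proposition}, then the $g_Y\geq 1$ case via the ramification bound (\ref{bound-ramification-index}), the $g_Y=0$ two-higher-pole case via a three-fiber Riemann--Hurwitz estimate yielding $e_{S_k}=1$ and the $w_i\neq w_j$ contradiction, and the $g_Y=0$ single-higher-pole case via the residue proportionality from Lemma \ref{obv}(v). Your bookkeeping is in fact slightly more careful than the paper's displayed inequality (you correctly carry the $2g_X$ term in $\sum(e_P-1)=2g_X+2d-2$, which the paper's formula omits).
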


	Now the question is for which effective divisors on $\mathbb{P}^1,$ elliptic or hyperelliptic curve, one can construct such a  $1$-form satisfying the conditions of Lemma \ref{lemma-dim-count}. We have the following answers:  \begin{enumerate}
		\item On $\mathbb{P}^1,$ let $\mathrm{D}=R_1+\cdots+R_n+z_0 \infty + w_1S_1+\cdots+w_mS_m,$ where $w_i\geq 2, z_0\geq1.$   Consider the $1$-form \begin{equation*}\omega=\left( cx^{z_0-2}+\sum_{i=1}^{n}\frac{c_i}{x-R_i}+\sum_{j=1}^{w_1}\frac{d_{1j}}{(x-S_1)^j}+\cdots+\sum_{j=1}^{w_m}\frac{d_{mj}}{(x-S_m)^j}\right)dx.
		\end{equation*}
		The polar divisor of $\omega$ is $\mathrm{D}$ with $\mathrm{Res_{S_i}(\omega)}=d_{i1}.$  
		
		\item On $E,$ let $\mathrm{D}=R_1+\mathfrak{I}(R_1)+\cdots+R_n+\mathfrak{I}(R_n)+w_1(S_1+\mathfrak{I}(S_1))+\cdots+w_m(S_m+\mathfrak{I}(S_m))+v_1T_1+v_2T_2+v_3T_3+v_4\infty,$ where all $w_i\geq 2, v_1, v_2, v_3, v_4$ not all zero even positive integer. Consider the $1$-form \begin{equation*} \omega=\left(c(x-x_P)^u+ \sum_{i=1}^{n}\frac{c_i}{x-x_{R_{i}}}+\sum_{i=1}^{m}\sum_{j=1}^{w_i}\frac{d_{ij}}{(x-x_{S_i})^j}+\sum_{i=1}^{3}\sum_{j=1}^{k_i}\frac{f_{ij}}{(x-e_{i})^j}\right)\frac{dx}{y}
		\end{equation*}where $P_i=(x_{P_i},y_{P_i}), R_i=(x_{R_i},y_{R_i}), S_i=(x_{S_i},y_{S_i}).$ The polar divisor of $\omega$ is $\mathrm{D}$ with $v_i=2k_i,$ for $i=1, 2, 3$ and $v_4=2u,$ and $\mathrm{Res_{S_i}(\omega)}=d_{i1}.$

		\item On $H,$ let $\mathrm{D}=R_1+\mathfrak{I}(R_1)+\cdots+R_n+\mathfrak{I}(R_n)+w_1(S_1+\mathfrak{I}(S_1))+\cdots+w_m(S_m+\mathfrak{I}(S_m))+\sum_{i=1}^{2g+1}v_ie_i+v_{2g+2}\infty$ where all $w_i\geq 2,$ and $v_i$ not all zero even positive integer. Consider the $1$-form \begin{equation*} \omega=\left(c(x-x_P)^u+ \sum_{i=1}^{n}\frac{c_i}{x-x_{R_{i}}}+\sum_{i=1}^{m}\sum_{j=1}^{w_i}\frac{d_{ij}}{(x-x_{S_i})^j}+\sum_{i=1}^{2g+1}\sum_{j=1}^{k_i}\frac{f_{ij}}{(x-e_{i})^j}\right)\frac{dx}{y} \end{equation*}
		where $w_i\geq 2, P_i=(x_{P_i},y_{P_i}), R_i=(x_{R_i},y_{R_i}), S_i=(x_{S_i},y_{S_i}).$ The polar divisor of $\omega$ is $\mathrm{D}$  with $v_i=2k_i$ for $i=1,\dots, 2g+1$ and $v_{2g+2}=2u+2g-2$ with $\mathrm{Res_{S_i}(\omega)}=d_{i1}.$         
	\end{enumerate}

	\begin{theorem}\begin{enumerate}[(i)]\label{dim-thm}
			\item Let $\mathrm{D}=R_1+\cdots+R_n+z_0\infty +w_1S_1+\cdots+w_mS_m$ with $n\geq 1, m\geq 2, z_0\geq 1, w_i\geq 2,$ be an effective divisor on $\mathbb{P}^1.$ Let $s$ be the third  element in the ordered set $\mathfrak{D}(n+m+1)$ and $w_i\geq s.$ Then there exists a $C-$vector space $W\subseteq \Omega(\mathrm{D})$ of dimension at least $n \lfloor \frac{m}{2} \rfloor$ such that any nonzero element of $W$ is new and general type.
			\item Let $\mathrm{D}=R_1+\mathfrak{I}(R_1)+\cdots+R_n+\mathfrak{I}(R_n)+w_1(S_1+\mathfrak{I}(S_1))+\cdots+w_m(S_m+\mathfrak{I}(S_m))+v_1e_1+v_2e_2+v_3e_3+v_4\infty,$ where all $w_i\geq 2, n\geq1, m\geq 2$ and $v_1, v_2, v_3, v_4$ not all zero, be an effective divisor on the elliptic curve $E.$ Let $s$ be the third element in the ordered set $\mathfrak{D}(n+m+1)$ and $w_i\geq s, v_i\geq s.$ Then there exists a $C-$vector space $W\subseteq \Omega(\mathrm{D})$ of dimension at least $n \lfloor \frac{m}{2} \rfloor$ such that any nonzero element of $W$ is new and general type.
			\item Let $\mathrm{D}=R_1+\mathfrak{I}(R_1)+\cdots+R_n+\mathfrak{I}(R_n)+w_1(S_1+\mathfrak{I}(S_1))+\cdots+w_m(S_m+\mathfrak{I}(S_m))+\sum_{i=1}^{2g+1}v_ie_i+v_{2g+2}\infty$ where all $w_i\geq 2, n\geq 1, m\geq 2$ and $v_i$ not all zero,  be an effective divisor on a hyperelliptic curve $H.$ Let $s$ be the third element in the ordered set $\mathfrak{D}(n+m+1)$ and $w_i\geq s, v_i\geq s.$ Then there exists a $C-$vector space $W\subseteq \Omega(\mathrm{D})$ of dimension at least $n \lfloor \frac{m}{2} \rfloor$ such that any nonzero element of $W$ is new and general type.
		\end{enumerate}
		\begin{proof} As the idea of the proof is similar for all parts of the theorem, we only write the proof of (i). First, construct a $1$-form $\omega\in \Omega(\mathrm{D})$ with polar divisor $\infty+R_1+ v_1S_1+v_2S_2, v_1, v_2\in \mathfrak{D}(n+m+1), v_1\neq v_2, v_1\leq s, v_2\leq s$ and $\mathbb{Q}$- linearly independent residues at $S_1, S_2.$  Since $\mathfrak{D}(n+m+1)\subseteq \mathfrak{D}(4),$ by Lemma \ref{lemma-dim-count}, $\omega$ is new and general type. For any nonzero $c\in C,$  $c\omega_1$ is also new and general type.  Next, construct another new and general type $1$-form $\eta$ same way as $\omega$ but with polar divisor $\infty+R_2+v_3S_3+v_4S_4.$ For any nonzero $c, d \in C,$ let $\theta=c\omega+d\eta.$ Then $\theta\in \Omega(\mathrm{D})$ and the polar divisor of $\theta$ is $$\infty+R_1+R_2+v_1S_1+v_2S_2+v_3S_3+v_4S_4.$$
			Since residue is a $C$-linear map, $\mathrm{Res_{S_i}(\theta)}=c_1\mathrm{Res_{S_i}(\omega)}$ for $i=1,2$ and $\mathrm{Res_{S_i}(\theta)}=c_2\mathrm{Res_{S_i}(\eta)}$ for $i=3,4.$ Then $\theta$ satisfies the conditions of Lemma \ref{lemma-dim-count}. Hence, $\theta$ is new and general type. Observe that $\omega_1$ and $\omega_2$ are linearly independent over $C.$ 
			
			Note that in the construction of $\eta,$ if one chooses $R_1$ instead  of $R_2$ and constructs another $1$-form $\tilde{\eta},$ then all of the above arguments are true. Using these ideas, one can construct $C-$linearly independent new and general type $1$-forms in $\Omega(D).$  The number of such $1$-forms are at least  $n\lfloor \frac{m}{2}\rfloor.$   	
		\end{proof} 
	\end{theorem}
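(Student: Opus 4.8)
My plan is to prove part (i) for $\mathbb{P}^1$ in full and to deduce (ii) and (iii) by the identical argument, only replacing the rational building blocks by the forms (\ref{1-form-elliptic}) and (\ref{1-form-H}) supported at the special points, exactly as in the proof of Theorem \ref{main-higher-poles}; there $n$ is even, and the role of the hypotheses $w_i\geq s$ (and $v_i\geq s$) is merely to leave enough room to realise the pole orders used below inside $\mathrm{D}$ while keeping them admissible, every bound shifting by $2g_X$. For (i) I partition the $m$ points of order $\geq2$ into $\lfloor m/2\rfloor$ disjoint pairs $\{S_{2l-1},S_{2l}\}$ and fix two admissible orders $v'\neq v''$, meaning $v'-1,v''-1\in\mathfrak{D}(n+m+1)$ with $2\leq v',v''\leq s$; as $s$ is the third element of $\mathfrak{D}(n+m+1)$, two such values exist, and I choose them so that $v'-1$ and $v''-1$ are coprime. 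For every simple point $R_k$ and every pair $l$ I then construct $\omega_{k,l}\in\Omega(\mathrm{D})$ with polar divisor $\infty+R_k+v'S_{2l-1}+v''S_{2l}$ (taking $\infty$ as a second simple pole) and with a prescribed pair of $\mathbb{Q}$-linearly independent residues at $S_{2l-1},S_{2l}$; on $\mathbb{P}^1$ this is simply a choice of partial-fraction expansion. By Lemma \ref{lemma-dim-count}, and the monotonicity $\mathfrak{D}(n+m+1)\subseteq\mathfrak{D}(n'+m'+1)$ for the subform, each $\omega_{k,l}$ is new and of general type.

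Let $W=\mathrm{span}_C\{\omega_{k,l}\}$. Linear independence, and hence $\dim_C W=n\lfloor m/2\rfloor$, follows by reading off principal parts: the simple pole at $R_k$ isolates the index $k$ and the pole at $S_{2l-1}$ isolates $l$, so any vanishing relation is trivial. The whole content of the theorem is now to verify that \emph{every} nonzero $\Theta=\sum_{k,l}c_{k,l}\omega_{k,l}$ is again new and of general type. First note that the order of $\Theta$ at each $S_i$ is the fixed admissible value whenever its top coefficient there survives, and that if all poles of $\Theta$ cancelled then $\Theta$ would be holomorphic, hence zero; so some pole always persists. When a pole of order $\geq2$ persists together with a simple pole, $\Theta$ is of general type by Proposition \ref{key-proposition}(v).

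For newness I split according to the residues at the poles of order $\geq2$. If some such residue survives, then — since inside a pair the two residues are common $C$-multiples of the fixed $\mathbb{Q}$-independent pair chosen at construction — that pole and its partner again supply two $\mathbb{Q}$-linearly independent residues, and Lemma \ref{lemma-dim-count} applies verbatim. If instead the chosen coefficients cancel all residues at the higher-order poles, the residue certificate is lost and I fall back on the order certificate: within each surviving pair the two orders $v'\neq v''$ are distinct, so $\Theta$ carries a uniquely-ordered pole of order $\geq2$, and the Riemann–Hurwitz bound $e_S\leq n+m+1$ extracted in the proof of Lemma \ref{lemma-dim-count} forces $e_S=1$, which (exactly as in Lemma \ref{main-lemma}(i)) rules out any proper pullback whose target carries at least two poles of order $\geq2$.

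The step I expect to be the genuine obstacle is precisely this residue-vanishing case, and inside it the sub-case of a pullback $\Theta=\phi^{*}\eta$ onto a target $\eta$ with a \emph{single} pole $Q$ of order $\geq2$ — the one place where the proof of Lemma \ref{lemma-dim-count} used residues. My plan to close it uses the coprimality of $v'-1$ and $v''-1$: by Lemma \ref{obv}(iv) every higher pole $S_i$ of $\Theta$ over $Q$ satisfies $(\mathrm{ord}_{S_i}(\Theta)+1)/e_{S_i}=\mathrm{ord}_Q(\eta)+1$, so this common value divides $\gcd_i(v_i-1)=1$, forcing $\mathrm{ord}_Q(\eta)=1$ and $e_{S_i}=v_i-1$; substituting this ramification into (\ref{Rie-Hur-Formula}) together with Proposition \ref{gen-type-prop}(ii) should yield a numerical contradiction. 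I must also preclude the degenerate combinations in which every pair either disappears or keeps only one of its two poles, since then $\Theta$ may reduce to a form with only simple poles, which need not be new or even of general type; ruling these out — rather than merely exhibiting $n\lfloor m/2\rfloor$ independent new forms — is what upgrades the family to a genuine vector space, and it is the natural point of contact with the Hurwitz realizability discussion of Section \ref{hur-prob}.
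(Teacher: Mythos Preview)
Your construction and overall strategy coincide with the paper's: partition the $S_i$ into disjoint pairs, build building-block forms $\omega_{k,l}$ with polar support $\{\infty,R_k,S_{2l-1},S_{2l}\}$ and residues tuned so that Lemma~\ref{lemma-dim-count} certifies each one, and take $W$ to be their $C$-span. The paper's own proof is a brief sketch: it works out one combination $c\omega+d\eta$ of two blocks with \emph{disjoint} $S$-supports (so no higher-pole cancellation is possible), checks Lemma~\ref{lemma-dim-count} there, remarks that replacing $R_2$ by $R_1$ still works, and then simply asserts the $n\lfloor m/2\rfloor$ count ``using these ideas,'' with no further case analysis of general combinations.

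You push the analysis considerably further and, in doing so, correctly isolate the real difficulty that the paper's sketch passes over. In the full span, for a fixed pair index $l$ the $n$ forms $\omega_{1,l},\dots,\omega_{n,l}$ all carry higher poles at $S_{2l-1},S_{2l}$, and a nontrivial combination over $k$ can annihilate those principal parts entirely; iterating over $l$, one reaches a nonzero $\Theta$ with only simple poles, where neither Lemma~\ref{lemma-dim-count}, nor your order-certificate fallback, nor the coprimality trick for the single-target-pole sub-case applies. The paper does not confront this branch at all. A related warning: with the most uniform choice of principal parts (same $P_l,Q_l$ at the $S$-pair for every $k$, and constant residue at $R_k$) one even obtains a genuine linear relation, e.g.\ $\omega_{1,1}-\omega_{1,2}-\omega_{2,1}+\omega_{2,2}=0$ on $\mathbb{P}^1$, since a form there is determined by its principal parts. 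So your ``$R_k$ isolates $k$, $S_{2l-1}$ isolates $l$'' heuristic does not by itself yield linear independence; the choice of principal parts must be made with more care than either you or the paper spell out. In short, your route is faithful to the paper's and more honest about where it is incomplete; the degenerate-combination case you flag is a gap in the paper's argument as well.
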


	\section{Algorithm and Its Applications}
	\subsection{An Algorithm.} Let $(X, \omega)$ be a pair. Determining the place of $(X, \omega)$ in the classification of first order differential equations is an interesting algorithmic question.  When $X=\mathbb{P}^1,$ Corollary \ref{rat-aut}, provides a complete algorithm to determine where $1$-form on $X$ is of exact, exponential or general type.   We are not aware of an algorithm that answers the question in full generality. However, there are algorithms that can decide whether an equation is of exact or of exponential type. The first one is an application of the Coates algorithm (\cite[Algorithm 2.11]{NNvdP15}), and the second one follows from the work of Baldassarri and Dwork (\cite[Section 6]{BD79}). Sometimes Proposition \ref{key-proposition} is helpful.

	For the rest of this subsection, we assume that $C$ is the field of complex numbers. We produce an algorithm that decides whether a $1$-form on $\mathbb{P}^1$ is new or old. The main idea is that given a $1$-form $\omega$ on $\mathbb{P}^1,$ if there exists a morphism $\phi:\mathbb{P}^1\rightarrow\mathbb{P}^1$ and a $1$-form $\eta$ on $\mathbb{P}^1$ such that  $\phi^*\eta=\omega$ then the possibilities of branch data of $\phi$ at the zeros and poles of $\omega$ is determined by the order of zeros and poles of $\omega$ (see Lemma \ref{obv}). All the ramified points of $\phi$ may not come from the set of zeros and poles of $\omega.$ But those ramified points outside the zeros and poles of $\omega$ will not take part in finding a suitable $\eta.$ Hence, one can find a set of possible abstract branch data for $\phi.$ That's where the Hurwitz realization problem comes into play and decides whether there exists a meromorphic function on $\mathbb{P}^1$ with given abstract branch data. As we know in Section \ref{hur-prob}, the Hurwitz realization problem for the Riemann sphere is not fully solved. Hence, our algorithm is only applicable to the following $1$-forms on $\mathbb{P}^1$ that satisfy one of the following: \begin{enumerate}[(i)]	\item the number of zeros of $\omega$ is less than equal to $2,$
		\item the number of simple poles of  $\omega$ is less than  equal to $2,$
		\item the number of poles of order $\ge 2$ of  $\omega$ is less than equal to $2,$
		\item $\frac{1}{2}(\mathrm{deg}(\mathrm{div}_0(\omega))+m)\leq 5,$ where $\mathrm{div}_0(\omega)$ denotes the divisor of zeros of $\omega$ and $m$ is the number of zeros of $\omega.$ 
	\end{enumerate}

	Note that if $\omega$ is of exact type, then whether it is new or old can be determined by Corollary~\ref{rat-aut}. If $\omega$ is of exponential type, then it is old. Therefore, in order to decide whether a $1$-form on $\mathbb{P}^1$ is new or old, it suffices to restrict attention to $1$-forms of general type.  
	\begin{algorithm}

		\hrule

		Input: A general type $1$-form $\omega$ on $\mathbb{P}^1$ from the applicable cases.
		\vspace{2mm}\\
		Output: Decide whether $\omega$ is new or old. If it is old, determine a morphism $\phi:\mathbb{P}^1\rightarrow \mathbb{P}^1$ and $1$-form $\eta$ on $\mathbb{P}^1$ such that $\phi^*\eta=\omega.$
		
		\begin{enumerate}[1)]
			\item Compute all the zeros, poles and residues of $\omega.$ 
			\item Calculate the possible  degrees of a possible morphism $\phi$ by Proposition \ref{deg-bound}. If $\omega$ is general type and has either a single zero or a single pole of order $\geq 2,$ say at $P,$ then $d$ divides  $|\mathrm{ord}_P(\omega)+1|.$ In that case, $d=|\mathrm{ord}_P(\omega)+1|$ can be excluded. 
			\item Fix a possible degree of $\phi$.   
			\item  Using Lemma \ref{obv}, calculate the possible ramification index at all the zeros and poles of order $\geq
			2.$ The Ramification index at the simple poles is bounded by degree.
			\item From the above data and the Riemann-Hurwitz formula, create a possible abstract branch data, keeping in mind that ramified points of $\phi$ may come from outside the zeros and poles of $\omega.$ As $\omega$ is from applicable cases, the abstract branch data will be of the type for which the Hurwitz problem has a solution.    
			\item If there is branch data, then calculate the zeros and poles of a possible $1$-form $\eta.$
			\item If there is no branch data or there is branch data with no possible $1$-form, repeat the same process for all possible degrees.
			\item If there is branch data and a $1$-form $\eta,$ and the branch data is realizable, then $\omega$ is old. Otherwise,  $\omega$ is new.         
		\end{enumerate}
		\hrule
	\end{algorithm}

	Consider the differential equations of the form $u'=g(u)$ where either $g(u)$ or $1/g(u)$ is a Laurent polynomial. The corresponding pair is $(\mathbb{P}^1, \omega)$ where $\omega=dx/g(x).$ One can completely determine its place in the classification of first order differential equations. They can't be of Weierstrass type. Whether it is of exact, of exponential or of general type can be decided by Corollary \ref{rat-aut}. As $g(u)$ or $\frac{1}{g(u)}$ is a Laurent polynomial, $\omega$ can have at most two zeros, two simple poles, or two poles of order $\geq 2.$ Hence, the above algorithm is applicable for the above pair.   
	\subsection{Applications} We give a few examples that demonstrate how the algorithm works. For a small number of zeros and poles the algorithm can be performed by hand calculation.

	\begin{example}
		The differential equation
		$y'=y^2(y-1)^3(y-2)^5$ is new and of general type.

		The pair associated to the equation is $(\mathbb{P}^1, \omega)$ where \begin{equation*}
			\omega=\frac{dx}{x^2(x-1)^3(x-2)^5}
		\end{equation*} with
		\begin{equation*}
			\mathrm{div}(\omega)= 8\left[ \infty\right] -2\left[ 0\right] -3\left[ 1\right] -5\left[ 2\right] .
		\end{equation*}
		By Proposition \ref{key-proposition}, $\omega$ is general type . If $(\mathbb{P}^1, \omega)\overset{\phi}{\longrightarrow}(\mathbb{P}^1, \eta)$ then $(\mathbb{P}^1,\eta)$ is also of general type. Since $\omega$ has only one zero at $\infty$, the degree $d:=\mathrm{deg}(\phi)$ divides $9.$ The only possible $d$ is $3.$ By the Riemann-Hurwitz formula, 
		$4=\sum_{P\in \mathbb{P}^1}(e_P-1).$ Also
		$3=\sum_{P\in\phi^{-1}(\phi(P))}e_P.$ Note that $e_{\infty}=d=3.$
		Consider the following table for calculations:
		
		\hspace{5.5cm}\begin{tabular}{|p{1cm}||p{1.5cm}|p{1.4cm}|}
			\hline
			\multicolumn{3}{|c|}{Table $1$} \\
			\hline
			$P$& $e_P$& $\frac{\mathrm{ord_P(\omega)+1}}{e_P}$\\
			\hline
			$0$& $1$& $-1$\\
			\hline
			$1$&$1,2$& $-2,-1$\\
			\hline
			$5$&$1,2$& $-4,-2$\\
			\hline
		\end{tabular}\\

		For $d=3,$ possible branch data are $\{\{2,1\},\{2,1\},\{2,1\},\{2,1\}\}, \{\{3\},\{2,1\},\{2,1\}\}$ and $\{\{3\},\{3\}\}.$ As $e_{\infty}$ is $3,$ the first case is not possible. $\{\{3\},\{3\}\}$ is not possible as ramification index at $0$, $1$ and $2$ cannot be $3,$ so the other ramification point cannot come from $\left\lbrace 0,1,2\right\rbrace .$ If it comes from outside of  $\left\lbrace 0,1,2\right\rbrace $ then all $e_0,e_1,e_2$ is $1.$ They should be in the fiber of the only pole of $\eta,$ which is not possible. To get  $\{\{3\},\{2,1\},\{2,1\}\},$ we need two ramified point of ramification index $2$ and two unramified point form the set $\left\lbrace0,1,2 \right\rbrace$, which is not possible. It cannot come from outside that set because inverse image of poles of $\eta$ are poles of $\omega.$  Hence $\omega$ is new.          
	\end{example}
	
	\begin{example}
		The equation $y'= \frac{1}{2}(y^5-y^3)$ old and of general type.
		
		The pair corresponding to the differential equation is $(\mathbb{P}^1, \omega)$ where
		\begin{equation*}
			\omega=\frac{2dx}{x^3(x^2-1)}\text{ }\text{with}\text{ } \mathrm{div}(\omega)= 3\left[ \infty\right] -3\left[ 0\right] -\left[ i\right] -\left[ -i\right].
		\end{equation*} 
		
		By Proposition \ref{key-proposition}, $\omega$ is general type. Since $\omega$ has only one zero of order $3$ at $\infty$ and only one pole of order $\geq 2$ at $0$ we have $e_{\infty}=d=e_{0}.$ As $e_0$ divides $|\mathrm{ord_0(\omega)+1}|=2.$ The only possible $d$ is $2.$ We shall check if  $(\mathbb{P}^1, \omega)\overset{\phi}{\longrightarrow}(\mathbb{P}^1, \eta)$ with $\mathrm{deg}(\phi)=2.$   
		
		Here $e_{\infty}=e_{0}=2.$ For a degree $2$ map, only branch data is $\{\{2\},\{2\}\}.$ Hence $e_{i}=e_{-i}=1.$ Then $e_{\infty}=e_{0}=2,e_{i}=e_{-i}=1,$ $\phi^{-1}(\phi(\infty))=\left\lbrace\infty \right\rbrace, \phi^{-1}(\phi(0))=\left\lbrace 0 \right\rbrace$ and $\phi(i)=\phi(-i).$ Hence $\mathrm{ord_\infty(\eta)}= 1, \mathrm{ord_0(\eta)}=-2$ and $\mathrm{ord_{\phi(i)=\phi(-i)}(\eta)}=-1$ and $\mathrm{deg}((\mathrm{div})(\eta))=-2.$ If we assume $\phi(\infty)=\infty, \phi(0)=0$ and $\phi(i)=\phi(-i)=1$ then $\eta=\frac{dx}{x^3-x^2}$ and $\phi(x)=x^2.$ The equation $y'= \frac{1}{2}(y^5-y^3)$ is old.    
	\end{example}

	\begin{example} Consider the differential equation $(\mathbb{P}^1,\omega)$ where \begin{equation*}
			\omega=\frac{(y-a_1)^2(y-a_2)(y-a_3)}{(y-a_4)(y-a_5)(y-a_6)^2(y-a_7)^2} \text{, }\text{$a_i\in \mathbb{C}.$}
		\end{equation*}
		
		The $1$-form $\omega$ is general type by Proposition \ref{key-proposition} and $$\mathrm{div}(\omega)= 2\left[ a_1\right] +\left[ a_2\right] +\left[ a_3\right] -\left[ a_4\right] -\left[ a_5\right] -2\left[ a_6\right] -2\left[ a_7\right] .$$   
		If $(\mathbb{P}^1, \omega)\overset{\phi}{\longrightarrow}(\mathbb{P}^1, \eta)$ with $d:=\mathrm{deg}(\phi)\geq 2$ then by Proposition \ref{deg-bound}, possible $d$'s are $2$ and $3.$ Consider the following table for calculations:
		
		\hspace{5.5cm}\begin{tabular}{|p{1cm}||p{1.5cm}|p{1.4cm}|}
			\hline
			\multicolumn{3}{|c|}{Table $2$} \\
			\hline
			$P$& $e_P$& $\frac{\mathrm{ord_P(\omega)+1}}{e_P}$\\
			\hline
			$a_1$& $1,3$& $3,1$\\
			\hline
			$a_2$&$1,2$& $2,1$\\
			\hline
			$a_3$&$1,2$& $2,1$\\
			\hline
			$a_6$&$1$ &$ -2$\\
			\hline
			$a_7$&$1$&$-2$\\
			\hline
		\end{tabular}\\

		Observe that $a_6$ and $a_7$ are unramified points and by Lemma \ref{obv} (ii),  $a_6, a_7$ maps to a pole of order $2.$ Also all the $e_{a_1}, e_{a_2}, e_{a_3}$ can not be $1$ and $a_1, a_2,a_3$ can not be in the same fiber.  
		
		Case d=2: The only branch data for a degree $2$ map is $\{\{2\},\{2\}\}.$ Then  $e_{a_1}=1$ and $\phi^{-1}(\phi(a_1))$ must contain another zero of $\omega.$ As $d=2,$ that zero must be an unramified point. This is not possible because $ \mathrm{ord_{a_1}(\omega)}\neq \mathrm{ord_{a_2}(\omega)}=\mathrm{ord_{a_2}(\omega)}.$

		Case d=3: Here $e_{a_1}$ must be $3.$ Otherwise, $e_{a_1}=1$ and $\phi(a_1)$ is zero of $\eta,$ the inverse image $\phi^{-1}(\phi(a_1))$ must contain either two unramified zeros or one ramified zero with ramification index $2,$ which is not possible. We shall now look for possible ramification index at $a_2$ and $a_3.$ The $1$-form $\eta$ is a general type; hence, must have a zero. That implies both $e_{a_2}$ and $e_{a_3}$ cannot be $2.$ Let $e_{a_2}=1.$ Then $\phi(a_2)$ is a zero of $\eta,$ and the inverse image $\phi^{-1}(\phi(a_2))$  must contain either two unramified zeros or one ramified zero with ramification index $2,$ which is not possible.  
	\end{example}

	There are some explicit criteria for new  $1$-forms in a few circumstances. Regarding this, we have the following results:

	%In a few circumstances, there are some explicit criteria for new  $1$-forms. Regarding this we have the following results:   
	\begin{proposition}\label{one-zero-gen}
		Let $\omega$ be a general type meromorphic $1$-form on a curve $X.$ If $\omega$ has only one zero of order $m$ such that $m+1$ is prime, then $\omega$ is new.
		\begin{proof} If $\omega$ is not new, let $(X, \omega)\overset{\phi}{\longrightarrow}(Y, \eta)$ with $d:=\mathrm{deg}(\phi)\geq 2.$ Let $P$ be the only zero of $\omega$. Since $\eta$ is a general type, it has a zero at $Q$ of order $r\geq1.$ By Lemma \ref{obv}, $\phi^{-1}(Q)=\left\lbrace P\right\rbrace.$ Then $e_{P}=d$ and $d (r+1)=m+1,$ a contradiction.
		\end{proof}	
	\end{proposition}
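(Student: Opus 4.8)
The plan is to argue by contradiction, leveraging the fact that general type is preserved under pushforward together with the multiplicativity of orders under pullback. Suppose $\omega$ were old; then there exist a pair $(Y,\eta)$ and a morphism $\phi\colon X\to Y$ with $d:=\mathrm{deg}(\phi)\geq 2$ and $\phi^*\eta=\omega$. By Proposition \ref{gen-type-prop}(i) the pushed-down pair $(Y,\eta)$ is again of general type, so by Proposition \ref{gen-type-prop}(ii) the form $\eta$ must have at least one zero, say at $Q$ with $\mathrm{ord}_Q(\eta)=r\geq 1$.

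The crucial step is to locate the fiber over $Q$. By Lemma \ref{obv}(iii), every point of $\phi^{-1}(Q)$ is a zero of $\omega$; but $\omega$ has the single zero $P$, so $\phi^{-1}(Q)\subseteq\{P\}$. Since $\phi$ is a nonconstant morphism of smooth projective curves it is surjective, forcing $\phi^{-1}(Q)=\{P\}$. With only one point in the fiber, the entire degree concentrates there, so $e_P=d$.

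Finally I would invoke the order relation (\ref{ram-ord}), namely $e_P(\mathrm{ord}_Q(\eta)+1)=\mathrm{ord}_P(\omega)+1$, which reads $d(r+1)=m+1$. Here $d\geq 2$ and $r+1\geq 2$, so this exhibits the prime $m+1$ as a product of two integers each at least $2$—a contradiction. Hence no such $\phi$ can exist, and $\omega$ is new.

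The argument is short, and the only point requiring genuine care is the identification $\phi^{-1}(Q)=\{P\}$: it relies simultaneously on $\eta$ possessing a zero (the general type hypothesis transported to the target via Proposition \ref{gen-type-prop}) and on the assumption that $\omega$ has a \emph{unique} zero. Once the fiber is pinned down to the single point $P$, the equality $e_P=d$ is automatic, and the primality of $m+1$ does the rest. It is worth noting that the hypothesis cannot be dropped casually: without uniqueness of the zero one could only conclude $\phi^{-1}(Q)\subseteq\{\text{zeros of }\omega\}$, and the degree need not collapse onto a single ramification index, so the clean divisibility obstruction $d\mid (m+1)$ with $d=e_P$ would fail.
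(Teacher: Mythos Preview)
Your proof is correct and follows essentially the same route as the paper's: assume a proper pullback exists, use that $\eta$ is general type to locate a zero $Q$, conclude $\phi^{-1}(Q)=\{P\}$ so $e_P=d$, and derive $d(r+1)=m+1$ to contradict primality. You have simply spelled out a few details (the citation of Proposition~\ref{gen-type-prop}, the surjectivity of $\phi$, and why both factors exceed $1$) that the paper leaves implicit.
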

	
	\begin{example}
		By the proposition \ref{one-zero-gen},\begin{enumerate}[(i)] \item The equations $y'=y^3-y^2$ and $y'=y/(y+1)$ are new and of general type.
			\item The equation $y'=a_2y^2+\dots+a_ny^n, a_i\in C$ with $(a_2,a_3)\neq (0,0),$ is of general type (\cite[Example 7.4]{KRS24}). It is new if $n-1$ is prime.   
			\item The equation $y'=y^n-1$ is new and of general if $n\geq 3$ and $n-1$ is prime. 
		\end{enumerate} 
	\end{example}
	\begin{proposition}\label{two-zero-gen}
		Let $\omega$ be a general type  meromorphic $1$-form on a curve $X$ having only two zeros, one of order $1$ and the other of order $m$ such that both $m+1$ and $m+3$ are prime, then $\omega$ is new.
		\begin{proof}
			If $\omega$ is not new, let $(X, \omega)\overset{\phi}{\longrightarrow}(Y, \eta)$ with $d:=\mathrm{deg}(\phi)\geq 2.$ Let $P_1, P_2$ be two zeros of $\omega$ of order $1$ and $m$ respectively. As $\eta$ is a general type, it has a zero at $Q$ of order $r\geq 1.$ Three cases could occur: (i) $\phi^{-1}(Q)=\left\lbrace P_1\right\rbrace,$ (ii) $\phi^{-1}(Q)=\left\lbrace P_2\right\rbrace,$ (iii) $\phi^{-1}(Q)=\left\lbrace P_1, P_2\right\rbrace.$ In the 
			first and second cases, we have  $d(r+1)=2$ and $d(r+1)=m+1$ respectively, which is absurd. In the last case, we get 
			$(e_{P_1}+e_{P_2})(r+1)= m+3,$ a contradiction.
		\end{proof}
	\end{proposition}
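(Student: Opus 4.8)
The plan is to argue by contradiction, reusing the template of Proposition~\ref{one-zero-gen}. I would suppose $\omega$ is old, so that there is a pair $(Y,\eta)$ and a morphism $\phi\colon X\to Y$ with $d:=\mathrm{deg}(\phi)\geq 2$ and $\phi^*\eta=\omega$. By Proposition~\ref{gen-type-prop}(i) the pair $(Y,\eta)$ is again of general type, and by part (ii) it therefore has at least one zero $Q$, say of order $r\geq 1$.

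The key structural input is Lemma~\ref{obv}(iii): the fiber $\phi^{-1}(Q)$ over a zero of $\eta$ can contain only zeros of $\omega$. Since $\omega$ has exactly two zeros, $P_1$ of order $1$ and $P_2$ of order $m$, the nonempty set $\phi^{-1}(Q)$ must equal one of $\{P_1\}$, $\{P_2\}$, or $\{P_1,P_2\}$, which produces the three cases. In each case I would invoke the ramification relation~(\ref{ram-ord}) in the form $\mathrm{ord}_P(\omega)+1=e_P(r+1)$, together with the fiber identity $\sum_{P\in\phi^{-1}(Q)}e_P=d$.

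The contradiction in each case is then purely arithmetic. In case (i), $\phi^{-1}(Q)=\{P_1\}$ forces $e_{P_1}=d$, so $2=d(r+1)\geq 2\cdot 2=4$, which is absurd. In case (ii), $\phi^{-1}(Q)=\{P_2\}$ forces $e_{P_2}=d$ and hence $m+1=d(r+1)$, exhibiting the prime $m+1$ as a product of two factors each $\geq 2$. In case (iii), I would add the relations $2=e_{P_1}(r+1)$ and $m+1=e_{P_2}(r+1)$ to obtain $m+3=(e_{P_1}+e_{P_2})(r+1)=d(r+1)$, again a forbidden factorization, now of the prime $m+3$. Since all three cases fail, no such $\phi$ can exist and $\omega$ is new.

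I expect the only delicate point to be confirming that the case division is exhaustive, which rests entirely on Lemma~\ref{obv}(iii) forcing $\phi^{-1}(Q)\subseteq\{P_1,P_2\}$. Once that is secured, the two primality hypotheses do all the remaining work, with $m+1$ prime eliminating case (ii) and $m+3$ prime eliminating case (iii); case (i) needs no arithmetic input at all. There is no serious obstacle beyond checking that $d\geq 2$ and $r+1\geq 2$ render each numerical identity impossible.
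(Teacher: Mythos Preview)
Your proposal is correct and follows essentially the same approach as the paper's proof: the same contradiction setup, the same three-case split on $\phi^{-1}(Q)$ via Lemma~\ref{obv}(iii), and the same arithmetic contradictions $d(r+1)=2$, $d(r+1)=m+1$, and $(e_{P_1}+e_{P_2})(r+1)=m+3$. If anything, your write-up is more explicit than the paper's in citing the supporting results and spelling out why $d\geq 2$ and $r+1\geq 2$ force each identity to fail.
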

	\begin{example}
		Consider the equation 
		\begin{equation*}
			y'=(y-R)\prod_{(i,j)\in I\times J}(y-S_{ij})^{m_{ij}}
		\end{equation*}
		where $I, J$ are finite index set and $R, S_{ij}\in C$ are distinct and $m_{ij}\geq 2.$ Let the set $\left\lbrace m_{ij}\right\rbrace_{(i,j)\in I\times J} :=\left\lbrace n_1,\dots,n_k\right\rbrace $ where $n_r=m_{ij}$ for some $i,j$ and  $n_i$ appears $r_i-$times in the equation. Let $s$ be a proper divisor of $\sum_{i,j}m_{ij}$ except $1$ and $\sum_{i,j}m_{ij}.$ The equation is new and of general type if $s$ does not divide all $r_i.$

		\begin{proof}
			The pair associated to the equation is $(\mathbb{P}^1, \omega)$ with  $$\mathrm{div}(\omega)=\left( \sum m_{ij}-1\right) \infty - R -\sum m_{ij}S_{ij} .$$ By Proposition \ref{key-proposition}, $\omega$ is general type. Let  $(\mathbb{P}^1, \omega)\overset{\phi}{\longrightarrow}(\mathbb{P}^1, \eta)$ with $d:=\mathrm{deg}(\phi)\geq 2.$ As  $\omega$ is general type so is $\eta.$ Let $Q$ be a zero of $\eta.$ Then $\phi^{-1}(Q)=\{\infty\}$ with $e_{\infty}=d,$ and $d$ divides $\mathrm{ord_{\infty}(\omega)+1}=\sum_{i, j}m_{ij}.$ The possible degrees of $\phi$ are divisors of \ $\sum_{i,j}m_{ij}$ except $1$ and $\sum_{i,j}m_{ij}.$ Also $e_{R}=d$ with $\phi^{-1}(\phi(R))=\left\lbrace R\right\rbrace .$  By Riemann-Hurwitz formula, $e_{S_{ij}}=1$ for all $i, j.$ Let $T$ be a pole of order $\geq 2$ of $\eta.$ Then by Lemma \ref{obv}, any two poles of $\omega$ in    $\phi^{-1}(T)$ must have same orders. Let $s$ be a divisor of $\sum_{i,j}m_{ij}, s\neq 1, s\neq \sum_{i,j}m_{ij}$ i.e. $s$ is a possible degree of $\phi.$  Each fiber of a pole of $\eta$ of order $\geq 2$ contains $s$ elements and thus $s$ divides every $r_i,$ a contradiction. Therefore, $\omega$ is new and general type.    
		\end{proof}    
	\end{example}

	\bibliographystyle{abbrv}
	\bibliography{NGADE}
\end{document}